%
%
%
\documentclass[11pt]{article}
\usepackage[latin1]{inputenc}
\usepackage{amsmath,amsthm,amssymb}
\usepackage{amsfonts}
\usepackage{amsmath,amsthm,amssymb,amscd}
\usepackage{latexsym}
\usepackage{color}
\usepackage{graphicx}
\usepackage{enumitem}
\usepackage{mathrsfs}

\textwidth172mm \textheight22cm \hoffset-24mm \voffset-20mm

\makeatletter \@addtoreset{equation}{section} \makeatother

\setlength{\parindent}{1em}

\newtheorem{theorem}{Theorem}[section]
\newtheorem{definition}{Definition}[section]

\newtheorem{lemma}{Lemma}[section]
\newtheorem{remark}{Remark}[section]

\begin{document}	
	
		\title{\sc An elliptic problem  of the Prandtl-Batchelor \\ type with a singularity}
		\author{\sc Debajyoti Choudhuri$^{a}$
		and 
		Du\v{s}an D.  Repov\v{s}$^{b,}$\footnote{Corresponding author:  dusan.repovs@guest.arnes.si} \\
			\small{$^{a}$Department of Mathematics, National Institute of Technology Rourkela,}\\
			\small{  Rourkela, 769008, Odisha, India.	{\it Email: dc.iit12@gmail.com}}\\
			\small{$^b$Faculty of Education and Faculty of Mathematics and Physics, University of Ljubljana, and}\\
			\small{Institute of Mathematics, Physics and Mechanics, Ljubljana, 1000, Slovenia. {\it Email: dusan.repovs@guest.arnes.si}}\\	
			\\}			
		 \date{}
		 \maketitle
			\begin{abstract}
			\noindent  We establish the existence of at least two solutions of the {\it Prandtl--Batchelor} like elliptic problem driven by a power nonlinearity and a singular term. The associated energy functional is nondifferentiable and  hence the usual variational techniques do not work.  We shall  use a novel approach in tackling the associated energy functional by a sequence of $C^1$ functionals and a {\it cutoff function}. Our main tools are fundamental elliptic regularity theory and the mountain pass theorem.			
			\begin{flushleft}
				{\it Keywords}:~  Elliptic free boundary problems; Mountain Pass Theorem; singularity.\\
				{\it Math. Subj. Classif. (2020)}:~35R35, 35Q35, 35J20, 46E35.
			\end{flushleft}
		\end{abstract}
	\section{Introduction}\label{s1}
	 We consider the following class of sublinear elliptic {\it free boundary} problems
	\begin{align}\label{main_prob}
	\left\{\begin{aligned}
	\begin{split}
	-\Delta u=\alpha\chi_{\{u>1\}}(x)f(x,(u-1)_+)&+\beta u^{-\gamma}~\text{in}~\Omega\setminus G(u)\\
	|\nabla u^+|^2-|\nabla u^-|^2&=2~\text{on}~G(u)\\
	u&>0~\text{in}~\Omega\\
	u&=0~\text{on}~\partial\Omega.
	\end{split}
	\end{aligned}\right.
	\end{align}
	Here,
	 $\Omega\subset\mathbb{R}^N$ is a bounded domain, $N\geq 2$, $0<\gamma<1$, the boundary $\partial\Omega$ has $C^{2,a}$ regularity, $G(u)=\partial\{u:u>1\}$, $\alpha, \beta>0$ are parameters  and $\chi$ is an indicator function. Furthermore, $\nabla u^{\pm}$ are the limits of $\nabla u$ from the sets $\{u:u>1\}$ and $\{u:u\leq 1\}^{\circ}$ respectively, and $(u-1)_+=\max\{u-1,0\}$. The nonlinear term $f$ is a locally H\"{o}lder continuous function $f:\Omega\times\mathbb{R}\rightarrow[0,\infty)$ that satisfies the following conditions for all  $x\in\Omega, t>0$.
	\begin{eqnarray}
	(f_1)&~& \text{For some}~c_0, c_1>0, |f(x,t)|\leq c_0+c_1 t^{p-1},~\text{where}~1<p<2.\nonumber\\
	(f_2)&~& f(x,t)>0.
	\end{eqnarray}
	 We shall  prove the existence of two distinct nontrivial solutions of \eqref{main_prob} for a sufficiently large $\alpha$.\\
	The case when $f(x,t)=1$, $\beta=0$ is the well-known {\it Prandtl--Batchelor} problem, where the region $\{u:u>1\}$ represents the vortex patch bounded by the vortex line $\{u:u=1\}$ in a steady state fluid flow for $N=2$ (cf. {\sc Batchelor} \cite{Batchelor1, Batchelor2}). This case has been studied by several authors, e.g. {\sc Caflisch} \cite{Caflisch}, {\sc Elcrat and Miller} \cite{Elc_mil}, {\sc Acker} \cite{Acker},  and {\sc Jerison and Perera} \cite{Jeri_Perera}. We drew our motivation for studying the present problem in this paper from {\sc perera} \cite{Perera1}. The problem studied by {\sc Perera} \cite{Perera1} is the case when $\beta=0$ in problem \eqref{main_prob}.\\
	The nonlinearity $f$ includes the sublinear case of $f(x,t)=t^{p-1}$. {\sc Jerison and Perera} \cite{Jeri_Perera} considered problem \eqref{main_prob} with $\beta=0$ for $2<p<\infty$ if $N=2$, and $2<p\leq 2^*=\dfrac{2N}{N-2}$ if $N\ge 3$. This problem has its application in the study of plasma that is confined in a magnetic field. The region there $\{u:u>1\}$ represents the plasma and the boundary of the plasma is modelled by the free boundary (cf.  {\sc Caffarelli and Friedman} \cite{Caff_Fried1}, {\sc Friedman and Liu} \cite{Fried_Liu1}, and {\sc Temam} \cite{Temam1}). \\
	Elliptic problems driven by a singular term have, off late, been of great interest. However,  we shall  discuss only the seminal work of {\sc Lazer and McKenna} \cite{Lazer1} from 1991 that has opened a new door for the researchers in elliptic and parabolic PDEs. The problem considered in \cite{Lazer1} was as follows:
	\begin{align}\label{lazer}
	\left\{\begin{aligned}
	\begin{split}
	-\Delta u&=p(x)u^{-\gamma}~\text{in}~\Omega\\
	u&=0~\text{on}~\partial\Omega
	\end{split}
	\end{aligned}\right.
	\end{align}
	where $p>0$ is a $C^{a}(\bar{\Omega})$ function, $\gamma>0$, $\Omega$ is a bounded domain with a smooth boundary $\partial\Omega$ of $C^{2+a}$ regularity ($0<a<1$), and $N\geq 1$. The authors in \cite{Lazer1} proved that problem \eqref{lazer} has a unique solution $u\in C^{2,a}(\Omega)\cap C(\bar{\Omega})$ such that $u>0$ in $\Omega$. Another noteworthy work, addressing the singularity driven elliptic problem is due to {\sc Giacomoni et al.} \cite{Giaco1}. {\sc Jerison and Perera} \cite{Jeri_Perera} obtained a mountain pass solution of this problem for the superlinear subcritical case. {\sc Yang and Perera} \cite{Yang_Perera1} addressed the problem for the critical case. Recently, {\sc Choudhuri and Repov\v{s}} \cite{chou1} established the existence of a solution for a semilinear elliptic PDE with a free boundary condition on a stratified Lie group. Furthermore, those readers looking to expand their knowledge on the techniques and trends of the topics in analysis of elliptic PDEs may refer to {\sc Papageorgiou et al.} \cite{vetro1}.\\
	 We shall  prove that a solution of problem \eqref{main_prob} is Lipschitz continuous of class $H_0^1(\Omega)\cap C^2(\bar{\Omega}\setminus G(u))$ and is a classical solution on $\Omega\setminus G(u)$. This solution vanishes on $\partial \Omega$ continuously and satisfies the {\it free boundary} condition in the following sense:
	 \begin{align}\label{free_bdry_cond}
	 \underset{\epsilon^+\rightarrow 0}{\lim}\int_{\{u=1+\epsilon^+\}}(2-|\nabla u|^2)\psi\cdot\hat{n} dS-\underset{\epsilon^+\rightarrow 0}{\lim}\int_{\{u=1-\epsilon^+\}}|\nabla u|^2\psi\cdot\hat{n}dS&=0
	 \end{align}
	$  \ \hbox{for all} \  \psi\in C_0^1(\Omega,\mathbb{R}^N)$ that are supported a.e. on $\{u:u\neq 1\}$. Here $\hat{n}$ is the outward drawn normal to $\{u:1-\epsilon^-<u<1+\epsilon^+\}$ and $dS$ is the surface element.\\
	The novelty of this work, which separates it from the work of Perera \cite{Perera1}, lies in the efficient handling of the singular term that disallows the associated energy functional to be $C^1$ at $u=0$. This difficulty is the reason why one cannot directly apply the results from the variational set up. To handle this situation, we shall  define a {\it cut-off} function.
	\begin{remark}\label{norm}
	Note that $\int_{\Omega}|\nabla u|^2dx$ will be often denoted by $\|u\|^2$, where $\|\cdot\|$ is the norm of an element in the Sobolev space $H_0^1(\Omega)$. 
	\end{remark} 
We begin by defining a {\it weak solution} of problem \eqref{main_prob}.
\begin{definition}\label{weak_soln_defn}
A function $u\in H_0^1(\Omega)$, $u>0$ a.e. in $\Omega$ is said to be a weak solution of problem  \eqref{main_prob} if it satisfies the following:
\begin{align}\label{weak_soln}
0&=\int_{\Omega}\nabla u\cdot\nabla \varphi dx-\alpha\int_{\Omega} g(x,(u-1)_+))\varphi dx-\beta\int_{\Omega}u^{-\gamma}\varphi dx~~ \ \hbox{for all} \  ~\varphi\in H_0^1(\Omega).
\end{align}
\end{definition}
We define the associated energy functional to  problem \eqref{main_prob} as follows:
\begin{align}\label{energy_fnal}
E(u)&=\frac{1}{2}\|u\|^2+\int_{\Omega}(\chi_{\{u>1\}}(x)-\alpha G(x,(u-1)_+))dx-\frac{\beta}{1-\gamma}\int_{\Omega}(u^+)^{1-\gamma}dx~~ \ \hbox{for all} \  ~u\in H_0^1(\Omega),
\end{align}
where $F(x,t)=\int_0^tf(x,t)dt$, $t\geq 0$.\\
The functional $E$ fails to be of $C^1$ class due to the term $\int_{\Omega}(u^+)^{1-\gamma}dx$. Moreover, it is nondifferentiable due to the term $\int_{\Omega}\chi_{\{u>1\}}(x)dx$.  We shall  first tackle the singular term by defining a cut-off function $\phi_{\beta}$ as follows:
\[\phi_{\beta}(u)= \begin{cases}
u^{-\gamma}, & \text{if}~u>u_{\beta} \\
u_{\beta}^{-\gamma}, & \text{if}~u\leq u_{\beta}.
\end{cases}\]
Here $u_{\beta}$ is a solution of the following problem
\begin{align}\label{singular_prob}
\begin{split}
-\Delta u&=\beta u^{-\gamma},~\text{in}~\Omega\\
u&>0,~\text{in}~\Omega\\
u&=0,~\text{on}~\partial\Omega.
\end{split}
\end{align}
The existence of $u_{\beta}$ can be guaranteed by {\sc Lazer and McKenna} \cite{Lazer1}. Moreover, a solution of problem \eqref{singular_prob} is a {\it subsolution} of \eqref{main_prob} (refer to Lemma \ref{existence_positive_soln} in Section \ref{s6}). Note that,  we call \eqref{singular_prob} a {\it singular problem}.  We denote $\Phi_{\beta}(u)=\int_{0}^{u}\phi_{\beta}(t)dt$.\\
\noindent Furthermore, the functional $E$ is nondifferentiable and hence we approximate it by $C^1$ functionals. This technique is adopted from the work of {\sc Jerison and Pererra} \cite{Jeri_Perera}. Working along similar lines, we now define a smooth function $h:\mathbb{R}\rightarrow[0,2]$ as follows:

\[h(t)= \begin{cases}\label{smooth_func_1}
0, & \text{if}~t\leq 0 \\
\text{a positive function}, & \text{if}~0<t<1\\
0, & \text{if}~t\geq 1
\end{cases}\]
and $\int_0^1h(t)dt=1$.
We let $H(t)=\int_0^th(t)dt$. Clearly, $H$ is a smooth and nondecreasing function such that 
\[H(t)= \begin{cases}\label{smooth_func_2}
0, & \text{if}~t\leq 0 \\
\text{a positive function}<1, & \text{if}~0<t<1\\
1, & \text{if}~t\geq 1.
\end{cases}\]
We further define for $\delta>0$ 
\begin{align}\label{smooth_func_3}
f_{\delta}(x,t)=H\left(\frac{t}{\delta}\right)f(x,t),~F_{\delta}(x,t)=\int_0^tf_{\delta}(x,t)dt,~\text{for all}~t\geq 0.
\end{align}
Define,
\begin{align}\label{delta_energy}
E_{\delta}(u)&=\frac{1}{2}\|u\|^2+\int_{\Omega}\left[H\left(\frac{u-1}{\delta}\right)-\alpha F_{\delta}(x,(u-1)_+)-\beta\Phi_{\beta}(u)\right]dx,~\text{for all}~u\in H_0^1(\Omega).
\end{align}
The functional $E_{\delta}$ is of $C^1$ class. 
The main result of this paper is the following theorem.
\begin{theorem}\label{main_result}
Let conditions $(f_1)-(f_2)$ hold. Then there exist $\Lambda, \beta_*>0$ such that for all $\alpha>\Lambda$, $0<\beta<\beta_*$ problem \eqref{main_prob} has two {\it Lipschitz continuous} solutions, say $u_1,u_2\in H_0^1(\Omega)\cap C^2(\bar{\Omega}\setminus G(u))$, satisfying \eqref{main_prob} classically in $\bar{\Omega}\setminus G(u)$. These solutions also satisfy the free boundary condition in the generalized sense and vanish continuously on $\partial\Omega$. Furthermore,
\begin{enumerate}
\item $E(u_1)<-|\Omega|\leq -|\{u:u=1\}|<E(u_2)$, where $|\cdot|$ denotes the Lebesgue measure in $\mathbb{R}^N$, hence $u_1, u_2$ are nontrivial solutions.
\item $0<u_2\leq u_1$ and the regions $\{u_1:u_1<1\}\subset\{u_2:u_2<1\}$ are connected where $\partial\Omega$ is connected. The sets $\{u_2>1\}\subset\{u_1>1\}$ are nonempty.  
\item $u_1$ is a minimizer of $E$ (but $u_2$ is not).
\end{enumerate}
\end{theorem}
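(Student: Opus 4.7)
The plan is to find two critical points of the $C^1$-approximant $E_\delta$ for each small $\delta > 0$ and then to pass to the limit $\delta \to 0^+$. The cutoff $\phi_\beta$ removes the singularity of $u^{-\gamma}$ at $u = 0$ and the smooth indicator $H$ makes the free boundary contribution differentiable, so $E_\delta$ is of class $C^1$ on $H_0^1(\Omega)$ and standard variational methods apply.

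\medskip

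First I would produce the minimizer $u_1^\delta$. Because $1 < p < 2$, the nonlinear term $\int_\Omega F_\delta(x,(u-1)_+)\,dx$ grows subquadratically in $\|u\|$, the term $\int_\Omega H((u-1)/\delta)\,dx$ is bounded by $|\Omega|$, and $\int_\Omega \Phi_\beta(u)\,dx$ has sublinear growth in $\|u\|$ because $\phi_\beta$ is bounded on $\{u \leq u_\beta\}$ and equal to $u^{-\gamma}$ above. Hence $E_\delta$ is coercive and sequentially weakly lower semicontinuous, so it attains a global minimum at some $u_1^\delta$. Testing with any $w \in H_0^1(\Omega)$ satisfying $w \geq 1+\delta$ on most of $\Omega$ and using $(f_2)$, one finds $\Lambda > 0$ such that for all $\alpha > \Lambda$ one has $E_\delta(u_1^\delta) \leq E_\delta(w) < -|\Omega|$, uniformly in $\delta$. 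Positivity $u_1^\delta > 0$ follows by comparison with the subsolution $u_\beta$ of \eqref{singular_prob}, guaranteed by the lemma cited from Section \ref{s6}.

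\medskip

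For the second critical point I would apply the mountain pass theorem to $E_\delta$. The key geometric observation is that whenever $u \leq 1$ a.e.\ in $\Omega$, both $H((u-1)/\delta)$ and $F_\delta(x,(u-1)_+)$ vanish, so $E_\delta(u) = \tfrac{1}{2}\|u\|^2 - \beta \int_\Omega \Phi_\beta(u)\,dx$, which for $0 < \beta < \beta_*$ small stays above $-|\Omega|$, hence strictly above $E_\delta(u_1^\delta)$. This produces a ridge in $H_0^1(\Omega)$ separating a neighborhood of $u_\beta$ from the minimizer $u_1^\delta$, which sits strictly below the ridge. The Palais--Smale condition follows from the coercivity of $E_\delta$ and the compact embeddings $H_0^1(\Omega) \hookrightarrow L^q(\Omega)$ for $1 \leq q < 2^*$. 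The mountain pass theorem then delivers a second critical point $u_2^\delta$ whose energy is uniformly bounded below by a value strictly greater than $E_\delta(u_1^\delta)$.

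\medskip

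To pass to the limit, uniform energy bounds yield uniform $H_0^1$-bounds on $u_i^\delta$. Classical elliptic regularity, following Jerison--Perera \cite{Jeri_Perera} and Perera \cite{Perera1}, upgrades these to uniform Lipschitz estimates on $\bar\Omega$ and $C^{2,a}$ bounds on compact subsets of $\bar\Omega \setminus G(u_i^\delta)$. Along a subsequence $\delta_n \to 0^+$, $u_i^{\delta_n}$ converges weakly in $H_0^1(\Omega)$ and uniformly on compact subsets of $\Omega \setminus G(u_i)$ to some $u_i \in H_0^1(\Omega) \cap C^2(\bar\Omega \setminus G(u))$, which solves \eqref{main_prob} classically off the free boundary and vanishes continuously on $\partial\Omega$. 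The ordering $0 < u_2 \leq u_1$ and the nonemptiness of $\{u_2 > 1\} \subset \{u_1 > 1\}$ are obtained by restricting the mountain pass paths to lie below $u_1^\delta$, using that truncation by $u_1^\delta$ from above does not increase $E_\delta$. The free boundary condition \eqref{free_bdry_cond} emerges from the domain-variation identity obtained by testing $E_\delta'(u_i^\delta) = 0$ against $\nabla u_i^\delta \cdot \psi$ and letting $\delta \to 0^+$. The main obstacle will be to rule out the degenerate possibility $u_1 = u_2$ after taking limits; this is ensured by the strict energy separation $E(u_1) < -|\Omega| \leq -|\{u_2=1\}| < E(u_2)$ inherited at the $\delta$-level, combined with the Lipschitz nondegeneracy of the limits, which prevents the free boundaries from collapsing as $\delta \to 0^+$.
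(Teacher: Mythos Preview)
Your overall architecture matches the paper's, but the mountain pass step has two genuine gaps. First, your geometry is not a valid mountain pass configuration: the alleged ridge $\{u\le 1 \text{ a.e.}\}$ \emph{contains} $u_\beta$ (for small $\beta$ one has $u_\beta<1$), so $u_\beta$ is not a valley separated from $u_1^\delta$ by this ridge. Even if one replaced $u_\beta$ by $0$, the infimum of $E_\delta$ over $\{u\le 1\}$ equals $\tfrac12\|u_\beta\|^2-\beta\int\Phi_\beta(u_\beta)\,dx$, which is \emph{negative} (since $\|u_\beta\|^2=\beta\int u_\beta^{1-\gamma}$ and $\tfrac12-\tfrac1{1-\gamma}<0$). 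So your ridge only yields a mountain pass level $>-|\Omega|$, and after the convergence lemma (which gives $\limsup E_{\delta_j}(u_2^{\delta_j})\le E(u_2)+|\{u_2=1\}|$) this is \emph{not} enough to conclude $E(u_2)>-|\{u_2=1\}|$; one needs a strictly positive lower bound $m_2(\alpha)>0$ on the approximate mountain pass levels. The paper obtains this by running the mountain pass between $0$ and $u_1^\delta$ across a small sphere $\|u\|=r$, where the key observation is that $\tilde F_\delta(x,t)\le C|t|^q$ with $q>2$ (since $f$ vanishes for $t\le 1$), so the superquadratic growth near $0$ gives a genuine positive mountain.

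Second, your device for the ordering $u_2^\delta\le u_1^\delta$ fails: truncating a path by $\min(\cdot,u_1^\delta)$ does \emph{not} decrease $E_\delta$, because $\int_{\{u\ge u_1^\delta\}}|\nabla u_1^\delta|^2$ need not be $\le \int_{\{u\ge u_1^\delta\}}|\nabla u|^2$. The paper instead truncates the \emph{functional}, replacing $f_\delta,\,h$ by $\tilde f_\delta(x,t)=f_\delta(x,(\min\{t,u_1^\delta(x)\}-1)_+)$ etc., and applies the mountain pass to the resulting $\tilde E_\delta$. Critical points of $\tilde E_\delta$ then satisfy a PDE whose right-hand side is dominated by that of $u_1^\delta$, so the maximum principle forces $u_2^\delta\le u_1^\delta$, and on such functions $\tilde E_\delta=E_\delta$. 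This is the missing mechanism in your argument.
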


The paper is organized as follows.
In Section \ref{s2}  we  introduce the key preliminary facts.
In Section \ref{s3}  we  prove a convergence lemma.
In Section \ref{s4}  we  prove a free boundary condition.
In Section \ref{s5}  we  prove two auxiliary lemmas.
In Section \ref{s6} we prove a result on positive Radon measure.
Finally, in Section \ref{s7}  we prove the main theorem.

\section{Preliminaries}\label{s2}

\noindent An important result that will be used to pass to the limit in the proof of  Lemma \ref{convergence1} is the following theorem due to {\sc Caffarelli et al.} \cite[Theorem $5.1$]{Caffa_jeri_kenig}.
\begin{lemma}\label{conv_res1}
Let $u$ be a Lipschitz continuous function on the unit ball $B_1(0)\subset\mathbb{R}^N$ satisfying the distributional inequalities
$$\pm\Delta u\leq A\left(\dfrac{1}{\delta}\chi_{\{|u-1|<\delta\}}(x)H(|\nabla u|)+1\right)$$
for constants $A>0$, $0<\delta\leq 1$, $H$ is a continuous function obeying $H(t)=o(t^2)$ as $t\to\infty$. Then there exists a constant $C>0$ depending on $N, A$ and $\int_{{B_1}(0)}u^2dx$, but not on $\delta$, such that 
$$\underset{x\in B_{\frac{1}{2}}(0)}{\sup}|\nabla u(x)|\leq C.$$
\end{lemma}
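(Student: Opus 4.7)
My plan is to follow the Caffarelli--Jerison--Kenig strategy: exploit the Alt--Caffarelli--Friedman (ACF) monotonicity formula for the two signed parts $w_\pm:=(u-1)_\pm$, suitably adapted to the approximate two-phase structure, and use the subquadratic growth hypothesis $H(t)=o(t^2)$ to absorb the strip error term. Outside the strip $S_\delta:=\{|u-1|<\delta\}$ the differential inequality reduces to $|\Delta u|\le A$, so standard $W^{2,p}$ Calder\'on--Zygmund estimates followed by the Sobolev embedding give an interior $C^{1,\alpha}$ bound on each connected component of $B_{3/4}\setminus S_\delta$ in terms of $N$, $A$ and $\|u\|_{L^2(B_1)}$. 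Hence any blow-up of the gradient must occur at points of $S_\delta$, and I focus on such points. A companion De~Giorgi/Moser iteration applied to $\pm\Delta u\le A(\delta^{-1}\chi_{S_\delta}H(|\nabla u|)+1)$ upgrades the $L^2$ control of $u$ to an $L^\infty$ bound on $B_{3/4}$ that is uniform in $\delta$, and this will enter the ACF estimate below.

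For $x_0\in S_\delta\cap \overline{B_{1/2}}$ I apply the subsolution form of the ACF monotonicity formula (as formulated in \cite{Caffa_jeri_kenig}) to the nonnegative functions $w_\pm$, which have disjoint supports and satisfy $-\Delta w_\pm\le A(\delta^{-1}\chi_{S_\delta}H(|\nabla u|)+1)$. The resulting inequality takes the schematic form
\begin{equation*}
\Phi(r,x_0):=\frac{1}{r^{4}}\prod_{i=\pm}\int_{B_r(x_0)}\frac{|\nabla w_i|^{2}}{|x-x_0|^{N-2}}\,dx \;\le\; C\,\Phi\bigl(\tfrac12,x_0\bigr)+C\,\mathcal E_\delta\bigl(\tfrac12\bigr),
\end{equation*}
valid for $r\in(0,1/2)$, with $C=C(N)$ and a strip-error $\mathcal E_\delta$ whose density is controlled by $H(|\nabla u|)/|\nabla u|^2=o(1)$ as $|\nabla u|\to\infty$; this is precisely where the hypothesis $H(t)=o(t^2)$ is indispensable. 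The anchor value $\Phi(1/2,x_0)$ is estimated by the $L^\infty$ bound and a Caccioppoli energy estimate already at hand from Step~1.

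Sending $r\to 0$ and invoking the pointwise limit $\Phi(r,x_0)\to c_N\,|\nabla w_+(x_0)|^2|\nabla w_-(x_0)|^2$ (wherever both one-sided gradients exist), one obtains the product bound $|\nabla w_+(x_0)|^2|\nabla w_-(x_0)|^2\le C$. Combined with a direct analysis of the distributional inequality on a slab of width $\delta$ across the strip---which yields the approximate free-boundary relation $\bigl||\nabla u^+|^2-|\nabla u^-|^2\bigr|\le C\,H(|\nabla u|)+C=o(|\nabla u|^2)$---the two inequalities can be resolved simultaneously: if $|\nabla u^+|$ were large then $|\nabla u^-|$ would be small (by the product bound), forcing $|\nabla u^+|^2\le C+o(|\nabla u^+|^2)$, a contradiction. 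Hence $|\nabla u(x_0)|\le C$ uniformly in $\delta$, and together with the interior estimate outside $S_\delta$ this gives the claimed Lipschitz bound on $B_{1/2}$.

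The principal obstacle is the ACF monotonicity step itself: classical ACF presupposes that each function is a nonnegative subsolution with a nicely controlled right-hand side, whereas here the right-hand side is neither zero nor uniformly bounded, but instead concentrates on the shrinking strip with singular density $\delta^{-1}H(|\nabla u|)$. Establishing the modified monotonicity with a strip-error that is absorbable precisely under $H(t)=o(t^2)$, and doing so with constants independent of $\delta$, is the technical crux of the Caffarelli--Jerison--Kenig proof underlying this lemma; the remaining ingredients (interior Calder\'on--Zygmund, Moser iteration, and the pointwise blow-up identity for $\Phi$) are standard but must be carefully quantified so that the final bound depends only on $N$, $A$ and $\int_{B_1(0)}u^2\,dx$.
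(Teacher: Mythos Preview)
The paper does not prove this lemma at all: it is quoted as Theorem~5.1 of Caffarelli--Jerison--Kenig \cite{Caffa_jeri_kenig} and used as a black box, so there is no in-paper argument to compare your proposal against. Your outline does target the correct external source, and the first two stages---a $\delta$-uniform $L^\infty$ bound via iteration, followed by the modified ACF monotonicity inequality for $w_\pm=(u-1)_\pm$ with a strip error absorbable precisely under $H(t)=o(t^2)$---are faithful to the Caffarelli--Jerison--Kenig strategy.

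The endgame, however, contains a real gap. For each fixed $\delta>0$ the function $u$ is a single Lipschitz function with no free boundary; the truncations $w_+$ and $w_-$ have disjoint open supports, so at every point at most one of $\nabla w_+$, $\nabla w_-$ is nonzero and the pointwise product $|\nabla w_+(x_0)|^2|\nabla w_-(x_0)|^2$ vanishes identically wherever it makes sense. Consequently the limit $\Phi(r,x_0)\to c_N|\nabla w_+|^2|\nabla w_-|^2$ is vacuous, and there are no ``one-sided gradients'' $\nabla u^\pm$ across a sharp interface on which to impose an approximate flux relation. The actual argument in \cite{Caffa_jeri_kenig} does not pass through a pointwise product identity: after the monotonicity bound it runs a contradiction at scale, assuming $|\nabla u|$ is large at some $x_0$, using the ACF product estimate to force one phase to have small normalized Dirichlet energy in a small ball, and then showing---via a harmonic comparison/replacement together with the $L^\infty$ bound and the distributional inequality---that this smallness is incompatible with the assumed large gradient. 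That scale-by-scale comparison step, rather than a pointwise blow-up identity, is what your sketch is missing.
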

\noindent The following are the {\it Palais--Smale} condition and the mountain pass Theorem.
\begin{definition}(cf. {\sc Kesavan} \cite[Definition $5.5.1$]{Kesh})\label{PS_COND}
Let $V$ be a Banach space and $J:V\rightarrow\mathbb{R}$ be a $C^1$-functional. Then $J$ is said to satisfy the Palais-Smale $(PS)$ condition if the following holds: Whenever $(u_n)$ is a sequence in $V$ such that $(J(u_n))$ is bounded and $(J'(u_n))\rightarrow 0$ strongly in $V^*$ (the dual space), then $(u_n)$ has a strongly convergent subsequence in $V$.
\end{definition}
\begin{lemma}\label{MP_THM}(cf. {\sc Alt and Caffarelli} \cite[Theorem $2.1$]{Ambrosetti})
Let $J$ be a $C^1$-functional defined on a Banach space $V$. Assume that $J$ satisfies the $(PS)$-condition and that there exists an open set $U\subset V$, $v_0\in U$, and $v_1\in X\setminus\bar{U}$ such that 
$$\underset{v\in\partial U}{\inf}J(v)>\max\{J(v_0),J(v_1)\}.$$
Then $J$ has a critical point at the level 
$$c=\underset{\psi\in\Gamma}\inf~\underset{u\in \psi([0,1])}\max{J(v)}\geq \underset{u\in\partial U}\inf J(u),$$
where $\Gamma=\{\psi\in C([0,1]):\psi(0)=v_0,\psi(1)=v_1\}$ is the class of paths in $V$ joining $v_0$ and $v_1$.
\end{lemma}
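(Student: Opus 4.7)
The plan is to prove Theorem \ref{main_result} by first producing, for every fixed $\delta>0$, two critical points of the $C^1$ regularized functional $E_\delta$ in \eqref{delta_energy}, and then letting $\delta\to 0^+$ while invoking Lemma \ref{conv_res1} to secure uniform Lipschitz bounds that enable passage to the limit. The cut-off $\phi_\beta$ removes the singularity at $u=0$, and Lemma \ref{existence_positive_soln} (from Section \ref{s6}) will guarantee that along the constructed sequences $u\ge u_\beta$ so that the cut-off can ultimately be undone, i.e.\ $\phi_\beta(u)=u^{-\gamma}$.

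First I would obtain $u_1^\delta$ as a global minimizer of $E_\delta$. The term $\tfrac12\|u\|^2$ plus the subcritical bound $(f_1)$ with $1<p<2$ yields coercivity and weak lower semicontinuity of $E_\delta$ on $H_0^1(\Omega)$; hence a minimizer exists. By plugging in a suitable test function $u=M\varphi_0$ with $\varphi_0\ge 0$, $\varphi_0\not\equiv 0$, and $M$ large, and by exploiting that $\alpha$ may be chosen $>\Lambda$ with $\Lambda$ large and $\beta<\beta_*$ small, I would show $E_\delta(u_1^\delta)\le\inf E_\delta<-|\Omega|$ with a bound independent of $\delta$. Next, for the second critical point, I would verify mountain pass geometry and apply Lemma \ref{MP_THM} with $v_0$ a small positive function near the subsolution $u_\beta$ (so $\{v_0>1\}=\emptyset$ and $E_\delta(v_0)$ is close to $-\frac{\beta}{1-\gamma}\int_\Omega u_\beta^{1-\gamma}$) and with $v_1$ a large function lying in the basin of the minimum, so $E_\delta(v_1)<-|\Omega|$. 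The open set $U$ is chosen to separate the regime $\{u<1\}$ a.e.\ from $\{u>1\}$ on a set of positive measure, and the key geometric estimate $\inf_{\partial U} E_\delta > \max\{E_\delta(v_0),E_\delta(v_1)\}>-|\Omega|$ is forced by the jump term $\int_\Omega H((u-1)/\delta)\,dx$, which contributes at least $-|\{u=1\}|$ to the energy along every path, uniformly in $\delta$. The $(PS)$ condition for $E_\delta$ follows from the subcritical growth $1<p<2$ combined with compactness of the Sobolev embeddings.

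Second, I would pass to the limit $\delta\to 0^+$. The distributional equation for $u_i^\delta$ has exactly the form $\pm\Delta u_i^\delta \le A(\delta^{-1}\chi_{\{|u-1|<\delta\}}+1)$ required by Lemma \ref{conv_res1} (after verifying the quadratic gradient term does not appear, since $u_i^\delta$ solves a linear-plus-lower-order equation away from $\{u=1\}$). This gives uniform Lipschitz bounds on compact subsets of $\Omega$. Extracting subsequences convergent weakly in $H_0^1(\Omega)$ and in $C^{0,\lambda}_{\mathrm{loc}}$, and applying the convergence Lemma \ref{convergence1} (Section \ref{s3}) to pass through the characteristic-like nonlinearity, I obtain limits $u_1,u_2\in H_0^1(\Omega)$ that are Lipschitz. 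The inequality $u_i^\delta\ge u_\beta$, propagated to the limit, removes the cut-off so that $u_i$ are weak solutions in the sense of Definition \ref{weak_soln_defn}. Standard elliptic Schauder theory on $\Omega\setminus G(u_i)$ (where the equation reduces to a smooth elliptic problem with H\"older data) yields $C^2$ regularity, and the generalized free boundary condition \eqref{free_bdry_cond} follows from the free-boundary identity proved in Section \ref{s4}.

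The three conclusions are then read off from the construction: the strict inequality $E(u_1)<-|\Omega|\le -|\{u=1\}|<E(u_2)$ follows from $E_\delta(u_1^\delta)<-|\Omega|$ and the mountain pass level estimate, which in particular shows $u_1\ne u_2$; item (3) is immediate since $u_1$ was built as the global minimizer; the ordering $0<u_2\le u_1$ and the topological statements in (2) follow from a weak comparison argument between the minimizer and the mountain pass solution, combined with the maximum principle on $\Omega\setminus G(u_i)$. The main obstacle I anticipate is the simultaneous control of the regularization parameter $\delta$ (needed to kill the discontinuous $\chi_{\{u>1\}}$) and the singular cut-off $u_\beta$: keeping the mountain pass level strictly above the global infimum, while quantifying it as $>-|\Omega|$ uniformly in $\delta$, is the delicate quantitative heart of the argument, and it is what forces the thresholds $\alpha>\Lambda$ and $\beta<\beta_*$ in the statement.
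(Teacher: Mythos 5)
Your proposal does not prove the statement it was asked to prove. The statement is Lemma~\ref{MP_THM}, i.e.\ the mountain pass theorem of Ambrosetti and Rabinowitz (the citation to Alt--Caffarelli in the paper is a typographical slip; the reference \cite{Ambrosetti} is correct). The paper itself offers no proof of this lemma --- it is quoted as a classical black box --- and a blind proof attempt should therefore have reconstructed the classical argument: the minimax characterization of the level $c$ combined with a deformation lemma. What you wrote instead is a strategy for Theorem~\ref{main_result}, in which you explicitly \emph{invoke} Lemma~\ref{MP_THM} (``applying Lemma~\ref{MP_THM} with $v_0$ a small positive function\ldots''). So the attempt is circular with respect to its target: it assumes the very result it was supposed to establish, and none of its content (the cut-off $\phi_\beta$, the regularization $E_\delta$, the limit $\delta\to 0^+$) bears on the lemma at hand.

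For the record, the missing argument runs as follows. Since $v_0\in U$, $v_1\notin\bar U$, and $\psi([0,1])$ is connected for every $\psi\in\Gamma$, each path must meet $\partial U$, whence
\begin{align*}
c=\underset{\psi\in\Gamma}{\inf}\,\underset{t\in[0,1]}{\max}\,J(\psi(t))\geq \underset{v\in\partial U}{\inf}\,J(v)>\max\{J(v_0),J(v_1)\},
\end{align*}
so in particular $c$ is finite and strictly above the endpoint values. Suppose $c$ is not a critical value. The $(PS)$ condition then yields $\epsilon,\sigma>0$ such that $\|J'(u)\|_{V^*}\geq\sigma$ whenever $|J(u)-c|\leq 2\epsilon$ (otherwise one extracts a $(PS)$ sequence at level $c$ converging to a critical point). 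The quantitative deformation lemma produces a continuous map $\eta:V\to V$ with $\eta(\{J\leq c+\epsilon\})\subset\{J\leq c-\epsilon\}$ and $\eta=\mathrm{id}$ on $\{J\leq c-2\epsilon\}$; shrinking $\epsilon$ so that $c-2\epsilon>\max\{J(v_0),J(v_1)\}$ guarantees $\eta(v_0)=v_0$ and $\eta(v_1)=v_1$. Choosing $\psi\in\Gamma$ with $\max_t J(\psi(t))\leq c+\epsilon$, the composition $\eta\circ\psi$ belongs to $\Gamma$ and satisfies $\max_t J(\eta\circ\psi(t))\leq c-\epsilon$, contradicting the definition of $c$. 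This deformation step (or, alternatively, an appeal to Ekeland's variational principle) is the heart of the lemma, and it is entirely absent from your write-up.
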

\noindent Before we prove Lemma  \ref{convergence1}, we would like to give an a priori estimate of the parameter $\beta$.
\section{Convergence lemma}\label{s3}
 We shall  denote the first eigenvalue of $(-\Delta)$ by $\alpha_1$ and the first eigenvector by $\varphi_1 $ (for an existence of $\alpha_1$,  $\varphi_1$ refer to {\sc Kesavan} \cite{Kesh}). Fix $\alpha$ to, say, $\alpha_0$ and let $\beta$ be any positive real number. On testing problem \eqref{main_prob} with $\varphi_1$ the following weak formulation has to hold if $u$ is a weak solution of problem \eqref{main_prob}. Thus
\begin{align}\label{beta_est}
\begin{split}
\alpha_1\int_{\Omega}u\varphi_1dx&=\int_{\Omega}\nabla u\cdot\nabla \varphi dx=\alpha\int_{\Omega}f(x,(u-1)_+)\varphi_1dx+\beta\int_{\Omega}(u^+)^{-\gamma}\varphi dx.
\end{split}
\end{align}
So there exists $\beta_{*}>0$ which depends on the chosen fixed $\alpha$, such that $\beta_*t^{-\gamma}+\alpha f(x,(t-1)_+)>\alpha_1 t$ for all $t>0$. This is a contradiction to \eqref{beta_est}. Therefore, $0<\beta<\beta_*$.
\begin{lemma}\label{convergence1}
Let conditions $(f_1)-(f_2)$ hold, $\delta_j\rightarrow 0$ ($\delta_j>0$) as $j\rightarrow\infty$, and let $u_j$ be a critical point of $E_{\delta_j}$. If $(u_j)$ is bounded in $H_0^1(\Omega)\cap L^{\infty}(\Omega)$, then there exists a Lipschitz continuous function $u$ on $\bar{\Omega}$ such that $u\in H_0^1(\Omega)\cap C^2(\bar{\Omega}\setminus G(u))$ and a subsequence such that
\begin{enumerate}[label=(\roman*)]
\item $u_j\rightarrow u$ uniformly over $\bar{\Omega}$,
\item $u_j\rightarrow u$ locally in $C^1(\bar{\Omega}\setminus\{u=1\})$,
\item $u_j\rightarrow u$ strongly in $H_0^1(\Omega)$,
\item $E(u)\leq\lim\inf E_{\delta_j}(u_j)\leq\lim\sup E_{\delta_j}(u_j)\leq E(u)+|\{u:u=1\}|$, i.e. $u$ is a nontrivial function if $\lim\inf E_{\delta_j}(u_j)<0$ or $\lim\sup E_{\delta_j}(u_j)>0$.
\end{enumerate}
Furthermore, $u$ satisfies $$-\Delta u=\alpha\chi_{\{u>1\}}(x)g(x,(u-1)_+)+\beta u^{-\gamma}$$ classically in $\Omega\setminus G(u)$, the free boundary condition is satisfied in the generalized sense and $u$ vanishes continuously on $\partial\Omega$. If $u$ is nontrivial, then $u>0$ in $\Omega$, the region $\{u:u<1\}$ is connected, and the region $\{u:u>1\}$ is nonempty.
\end{lemma}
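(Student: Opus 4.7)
The plan is to differentiate $E_{\delta_j}$ to identify the Euler--Lagrange equation satisfied by each $u_j$, apply Lemma \ref{conv_res1} to obtain a uniform Lipschitz bound, and then extract a subsequence with the desired convergence properties. Writing out $E_{\delta_j}'(u_j)=0$, one sees that $u_j$ is a distributional solution of
\[
-\Delta u_j \;=\; -\frac{1}{\delta_j}\,h\!\left(\frac{u_j-1}{\delta_j}\right) \,+\, \alpha\, f_{\delta_j}(x,(u_j-1)_+) \,+\, \beta\,\phi_\beta(u_j)
\]
on $\Omega$. Since $h$ is supported in $[0,1]$, the first term is supported on $\{|u_j-1|\le\delta_j\}$; using the $L^\infty$ bound on $(u_j)$, the growth condition $(f_1)$, and $\phi_\beta(u_j)\le u_\beta^{-\gamma}$, the right-hand side is pointwise dominated by $\frac{A}{\delta_j}\chi_{\{|u_j-1|<\delta_j\}}+A$ for some $A>0$ independent of $j$. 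Lemma \ref{conv_res1}, applied after covering $\bar\Omega$ by balls and straightening $\partial\Omega$ via its $C^{2,a}$-regularity, then yields $\|u_j\|_{C^{0,1}(\bar\Omega)}\le C$.

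By Arzel\`a--Ascoli a subsequence (still indexed by $j$) converges uniformly to a Lipschitz function $u$ on $\bar\Omega$, which gives (i). For (ii), fix a compact set $K\Subset\bar\Omega\setminus\{u=1\}$; by uniform convergence we have, for $j$ large, either $u_j\ge 1+\delta_j$ or $u_j\le 1-\delta_j$ on $K$, so the $h$-term vanishes identically there. Using the subsolution property $u_j\ge u_\beta$ (from Lemma \ref{existence_positive_soln}) the remaining right-hand side is locally H\"older continuous with bounds independent of $j$, and Schauder estimates yield uniform $C^{2,a}$-bounds on $K$. This gives both the local $C^1$-convergence and, passing to the limit in the equation on each such $K$, shows that $u$ solves $-\Delta u=\alpha\chi_{\{u>1\}}f(x,(u-1)_+)+\beta u^{-\gamma}$ classically on $\Omega\setminus G(u)$.

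For (iii), the $H_0^1$-bound on $(u_j)$ gives weak convergence $u_j\rightharpoonup u$; testing the Euler--Lagrange equation against $u_j-u$ and using (i)--(ii), together with the fact that $\tfrac{1}{\delta_j}h(\tfrac{u_j-1}{\delta_j})$ acts on a set whose measure collapses and against a factor $u_j-u\to 0$ uniformly, produces $\|u_j\|^2\to\|u\|^2$ and hence strong convergence. For (iv), the strong $H_0^1$-convergence and uniform convergence handle the $\|\cdot\|^2$, $F_{\delta_j}$, and $\Phi_\beta$ terms; the only delicate contribution is $\int_\Omega H((u_j-1)/\delta_j)\,dx$. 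Pointwise this integrand tends to $1$ on $\{u>1\}$, to $0$ on $\{u<1\}$, and stays in $[0,1]$ on $\{u=1\}$, so by dominated convergence
\[
|\{u>1\}|\;\le\;\liminf_{j}\int_\Omega H\!\left(\tfrac{u_j-1}{\delta_j}\right)dx\;\le\;\limsup_{j}\int_\Omega H\!\left(\tfrac{u_j-1}{\delta_j}\right)dx\;\le\;|\{u\ge 1\}|,
\]
which translates into the chain of inequalities in (iv).

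The main obstacle is the uniform Lipschitz estimate: verifying the hypotheses of Lemma \ref{conv_res1} requires careful bookkeeping of the three competing contributions (the concentrated $h$-term, the growth-controlled $f_{\delta_j}$-term, and the singular $\phi_\beta$-term), and reducing the global regularity to the interior statement through boundary flattening. Once this is secured, the free boundary condition \eqref{free_bdry_cond} follows by multiplying the equation for $u_j$ by $\psi\cdot\nabla u_j$, integrating by parts on $\{u_j<1-\delta_j\}\cup\{u_j>1+\delta_j\}$, and passing to the limit using (ii) and the co-area formula to treat the boundary integrals over $\{u_j=1\pm\varepsilon^+\}$. Positivity $u>0$ and non-emptiness of $\{u>1\}$ follow from the strong maximum principle applied to the limiting equation together with the comparison $u\ge u_\beta>0$; connectedness of $\{u<1\}$ is obtained by contradiction, since a compactly contained component would force a harmless harmonic minorant to violate the boundary maximum principle in the presence of the strictly positive singular source.
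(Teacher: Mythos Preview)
Your outline follows the paper's overall scheme, but there are two genuine gaps.

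\medskip
\textbf{Lipschitz bound up to the boundary.} You assert that $\phi_\beta(u_j)\le u_\beta^{-\gamma}$ gives a pointwise bound $\frac{A}{\delta_j}\chi_{\{|u_j-1|<\delta_j\}}+A$ with $A$ independent of $j$, and then apply Lemma~\ref{conv_res1} up to $\partial\Omega$ by ``covering $\bar\Omega$ by balls and straightening $\partial\Omega$.'' This fails on both counts. First, $u_\beta^{-\gamma}$ is \emph{not} bounded: $u_\beta\to 0$ on $\partial\Omega$, so $u_\beta^{-\gamma}\to\infty$ there and no uniform constant $A$ exists. Second, Lemma~\ref{conv_res1} is a purely interior estimate; it gives no control on boundary half-balls, and boundary-flattening does not supply the missing Dirichlet information. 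The paper circumvents both issues by a barrier argument: it constructs $\varphi_0$ solving $-\Delta\varphi_0=\alpha C_0+\beta u_\beta^{-\gamma}$, shows $u_j\le\varphi_0$ by the maximum principle, and concludes that $\{u_j\ge 1\}$ lies at a fixed distance $d_0>0$ from $\partial\Omega$. In the $d_0$-strip the $h$-term and the $f_{\delta_j}$-term vanish (since $u_j<1$ there), so $u_j$ satisfies only the singular equation, for which one has uniform $C^{2,a}$ estimates \`a la Lazer--McKenna; in the complementary compact region $u_\beta\ge C_K>0$ and Lemma~\ref{conv_res1} applies legitimately. You need this two-zone decomposition.

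\medskip
\textbf{Strong $H_0^1$-convergence.} Testing the equation with $u_j-u$ produces the term
\[
\int_\Omega \frac{1}{\delta_j}\,h\!\left(\frac{u_j-1}{\delta_j}\right)(u_j-u)\,dx,
\]
which you claim is $o(1)$ because ``the measure collapses'' and $u_j-u\to 0$ uniformly. Neither is enough: the integrand is of order $\delta_j^{-1}\|u_j-u\|_\infty$ on the support of $h$, and nothing forces $\|u_j-u\|_\infty/\delta_j\to 0$; moreover, if $|\{u=1\}|>0$ the support $\{1<u_j<1+\delta_j\}$ need not shrink. The paper avoids this by testing with $u_j-1$ instead: writing $t=u_j-1$, the $h$-contribution becomes $\frac{t}{\delta_j}h(t/\delta_j)\ge 0$, so it can be dropped with an inequality sign, yielding $\limsup\|u_j\|^2\le$ an expression that is then identified with $\|u\|^2$ via separate computations on $\{u>1+\delta\}$ and $\{u<1-\delta\}$ and a passage $\delta\to 0$.
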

\begin{proof}[Proof of Lemma \ref{convergence1}]
Let $0<\delta_j<1$. Consider the following problem:
\begin{align}\label{approx_prob_1}
\left\{\begin{aligned}
\begin{split}
-\Delta u_j&=-\frac{1}{\delta_j}h\left(\frac{u_j-1}{\delta_j}\right)+\alpha f_{\delta_j}(x,(u_j-1)_+)+\beta \phi_{\beta}(u_j)~\text{in}~\Omega\\
u_j&>0~\text{in}~\Omega\\
u_j&=0~\text{on}~\partial\Omega.
\end{split}
\end{aligned}\right.
\end{align}
The nature of the problem being a sublinear one and driven by a singularity allows us to conclude by an iterative technique that the sequence $(u_j)$ is bounded in $L^{\infty}(\Omega)$. Therefore, there exists $C_0$ such that $0\leq f_{\delta_{j}}(x,(u_j-1)_+)\leq C_0$. Let $\varphi_0$ be a solution of the following problem
\begin{align}\label{approx_prob_2}
\left\{\begin{aligned}
\begin{split}
-\Delta \varphi_0&=\alpha C_0+\beta u_{\beta}^{-\gamma}~\text{in}~\Omega,\\
\varphi_0&=0~\text{on}~\partial\Omega.
\end{split}
\end{aligned}\right.
\end{align}
Now since $h\geq 0$, we have that $-\Delta u_j\leq \alpha C_0+\beta u_{\beta}^{-\gamma}=-\Delta\varphi_0$ in $\Omega$. Therefore by the maximum principle, 
\begin{align}\label{comparison1}
0\leq u_j(x)\leq \varphi_0(x),~ \ \hbox{for all} \   x\in\Omega.
\end{align}
From the argument used in the proof of Lemma \ref{auxprob_appendix}, together with $\beta_*>0$ and large $\Lambda>0$, we conclude that $u_j>u_{\beta}$ in $\Omega$ for all $\beta\in(0,\beta_*)$. Since $\{u_j:u_j\geq 1\}\subset\{\varphi_0:\varphi_0\geq 1\}$, hence $\varphi_0$ gives a uniform lower bound, say $d_0$, on the distance from the set $\{u_j:u_j\geq 1\}$ to $\partial\Omega$. Furthermore, $u_j$ is a positive function satisfying the singular problem in a $d_0$-neighborhood of $\partial\Omega$. Thus $(u_j)$ is bounded with respect to the $C^{2,a}$ norm. Therefore, it has a convergent subsequence in the $C^2$-norm in a $\dfrac{d_0}{2}$ neighborhood of the boundary $\partial\Omega$. Obviously, $0\leq h\leq 2\chi_{(-1,1)}$ and hence
\begin{align}\label{comparison2}
\begin{split}
\pm\Delta u_j&=\pm\frac{1}{\delta_j}h\left(\frac{u_j-1}{\delta_j}\right)\mp\alpha f_{\delta_j}(x,(u_j-1)_+)+\beta u_j^{-\gamma}\\
&\leq\frac{2}{\delta_j}\chi_{\{|u_j-1|<\delta_j\}}(x)+\alpha C_0+\beta u_j^{-\gamma}\\
&\leq\frac{2}{\delta_j}\chi_{\{|u_j-1|<\delta_j\}}(x)+\alpha C_0+\beta u_{\beta}^{-\gamma}.
\end{split}
\end{align}
By {\sc Lazer and McKenna} \cite{Lazer1}, for any subset $K$ of $\Omega$ that is relatively compact in it, i.e. $K\Subset\Omega$, we have that $u_{\beta}\geq C_K$ for some $C_K>0$. Therefore
\begin{align}\label{comparison3}
\begin{split}
\pm\Delta u_j&\leq\frac{2}{\delta_j}\chi_{\{|u_j-1|<\delta_j\}}(x)+\alpha C_0+\beta C_K^{-\gamma}.
\end{split}
\end{align}
Since, $(u_j)$ is bounded in $L^2(\Omega)$ and by Lemma \ref{conv_res1} it follows that there exists $A>0$ such that 
\begin{align}\label{aux1}
\underset{x\in B_{\frac{r}{2}}(x_0)}{\sup}|\nabla u_j(x)|&\leq\frac{A}{r}
\end{align}
for suitable $r>0$ such that $B_r(0)\subset\Omega$. Therefore, $(u_j)$ is uniformly Lipschitz continuous on the compact subsets of $\Omega$ such that its distance from the boundary $\partial\Omega$ is at least $\frac{d_0}{2}$ units.\\
Thus by the {\it Ascoli--Arzela} theorem applied to $(u_j)$, we have a subsequence, still denoted the same, such that it converges uniformly to a Lipschitz continuous function $u$ in $\Omega$ with zero boundary values and with strong convergence in $C^2$ on a $\frac{d_0}{2}$-neighborhood of $\partial\Omega$. By the {\it Eberlein--\v{S}mulian} theorem we can conclude that $u_j\rightharpoonup u$ in $H_0^1(\Omega)$.\\
We now prove that $u$ satisfies the following equation
$$-\Delta u=\alpha\chi_{\{u>1\}}(x)f(x,(u-1)_+)+\beta u^{-\gamma}$$
in the set $\{u\neq 1\}$. This will include the cases $(i)~0<u_{\beta}<1<u$, $(ii)~1<u_{\beta}<u$, $(iii)~0<u_{\beta}<u<1$.  The cases $(i)-(iii)$ do not pose any real mathematical obstacle. Let $\varphi\in C_0^{\infty}(\{u>1\})$. Then $u\geq 1+2\delta$ on the support of $\varphi$ for some $\delta>0$. Using the convergence of $u_j$ to $u$ uniformly on $\Omega$, we have $|u_j-u|<\delta$ for any sufficiently large $j,\delta_j<\delta$. So $u_j\geq 1+\delta_j$ on the support of $\varphi$. Testing \eqref{convergence1} with $\varphi$ yields
\begin{align}\label{weak_conv_1}
\int_{\Omega}\nabla u_j\cdot\nabla\varphi dx&=\alpha\int_{\Omega}f(x,u_j-1)\varphi dx+\beta\int_{\Omega} u_j^{-\gamma}\varphi dx.
\end{align}
On passing to the limit $j\rightarrow\infty$ we get 
\begin{align}\label{weak_conv_2}
\int_{\Omega}\nabla u\cdot\nabla\varphi dx&=\alpha\int_{\Omega}f(x,u-1)\varphi dx+\beta\int_{\Omega} u^{-\gamma}\varphi dx.
\end{align}
To arrive at \eqref{weak_conv_2}, we have used the weak convergence of $u_j$ to $u$ in $H_0^1(\Omega)$ and the uniform convergence of the same in $\Omega$. Hence $u$ is a weak solution of $-\Delta u=\alpha f(x,u-1)+\beta u^{-\gamma}$ in $\{u>1\}$. Since $f, u$ are continuous and  Lipschitz continuous respectively, we conclude by the Schauder estimates that it is also a classical solution of $-\Delta u=\alpha f(x,u-1)+\beta u^{-\gamma}$ in $\{u:u>1\}$. Similarly, on choosing $\varphi\in C_0^{\infty}(\{u:u<1\})$, one can find a $\delta>0$ such that $u\leq 1-2\delta$. Therefore, $u_j<1-\delta$. Using the arguments as in \eqref{weak_conv_1} and \eqref{weak_conv_2}, we find that $u$ satisfies 
$-\Delta u=\beta u^{-\gamma}$ in the set $\{u:u<1\}$.\\
\noindent Let us now see what is the nature of $u$ in the set $\{u:u\leq 1\}^{\circ}$. On testing \eqref{convergence1} with any nonnegative function, passing to the limit $j\rightarrow\infty$, and using the fact that $h\geq 0$, $H\leq 1$, we can show that $u$ satisfies 
\begin{align}\label{weak_conv_3}
-\Delta u&\leq\alpha f(x,(u-1)_+)+\beta u^{-\gamma}~\text{in}~\Omega
\end{align}
in the distributional sense. Also, we see that $u$ satisfies $-\Delta u=\beta u^{-\gamma}$ in the set $\{u:u<1\}$ . Furthermore, $\mu=\Delta u+\beta u^{-\gamma}$ is a positive Radon measure supported on $\Omega\cap\partial\{u:u<1\}$ (refer to Lemma \ref{positive_Rad_meas} in Section \ref{s6}). From \eqref{weak_conv_3}, the positivity of the Radon measure $\mu$ and the usage of Section $9.4$ in {\sc Gilbarg and Trudinger} \cite{Gil_trud}, we conclude that $u\in W_{\text{loc}}^{2,p}(\{u:u\leq 1\}^{\circ})$, $1<p<\infty$. Thus $\mu$ is supported on $\Omega\cap\partial\{u:u<1\}\cap\partial\{u:u>1\}$ and $u$ satisfies $-\Delta u=\beta u^{-\gamma}$ in the set $\{u:u\leq 1\}^{\circ}$.\\
\noindent To prove $(ii)$,  we show that $u_j\rightarrow u$ locally in $C^1(\Omega\setminus\{u:u=1\})$. Note that we have already proved that $u_j\rightarrow u$ in the $C^2$ norm in a neighborhood of $\partial\Omega$ of $\bar{\Omega}$. Suppose that $M\subset\subset\{u:u>1\}$. In this set $M$ we have $u\geq 1+2\delta$ for some $\delta>0$. Thus, for sufficiently large $j$ with $\delta_j<\delta$, we have $|u_j-u|<\delta$ in $\Omega$ and hence $u_j\geq 1+\delta_j$ in $M$. From \eqref{approx_prob_1} we derive that $$-\Delta u=\alpha f(x,u-1)+\beta u^{-\gamma}~\text{in}~M.$$
Clearly, $f(x,u_j-1)\rightarrow f(x,u-1)$ in $L^p(\Omega)$ for $1<p<\infty$ because $f$ is a locally H\"{o}lder continuous function and $u_j\rightarrow u$ uniformly in $\Omega$. Our analysis says something stronger. Since $-\Delta u=\alpha f(x,u-1)$ in $M$, we have that $u_j\rightarrow u$ in $W^{2,p}(M)$. By the embedding $W^{2,p}(M)\hookrightarrow C^1(M)$ for $p>2$, we have that $u_j\rightarrow u$ in $C^1(M)$. This shows that $u_j\rightarrow u$ in $C^1(\{u>1\})$. Working along similar lines we can also show that $u_j\rightarrow u$ in $C^1(\{u:u<1\})$.\\
\noindent  We shall  now prove $(iii)$. Since $u_j\rightharpoonup u$ in $H_0^1(\Omega)$, we know that by the weak lower semicontinuity of the norm $\|\cdot\|$, 
$$\|u\|\leq\lim\inf\|u_j\|.$$
It suffices to prove that $\lim\sup\|u_j\|\leq \|u\|$. To achieve this, we multiply \eqref{approx_prob_1} with $u_j-1$ and then integrate by parts.  We shall  also use the fact that $th\left(\frac{t}{\delta_j}\right)\geq 0$ for any $t$. This gives
\begin{align}\label{weak_conv_4}
\begin{split}
\int_{\Omega}|\nabla u_j|^2dx&\leq \alpha\int_{\Omega}f(x,(u_j-1)_+)(u_j-1)_+dx-\int_{\partial\Omega}\frac{\partial u_j}{\partial\hat{n}}dS+\beta\int_{\Omega}u_j^{-\gamma}(u_j-1)_+dx\\
&\rightarrow\alpha\int_{\Omega}f(x,(u-1)_+)(u-1)_+dx-\int_{\partial\Omega}\frac{\partial u}{\partial\hat{n}}dS+\beta\int_{\Omega}u^{-\gamma}(u-1)_+dx
\end{split}
\end{align}
as $j\rightarrow\infty$. Here, $\hat{n}$ is the outward drawn normal to $\partial\Omega$. We saw earlier that $u$ is a weak solution to $-\Delta u=\alpha f(x,u-1)+\beta u^{-\gamma}$ in $\{u:u>1\}$. Let $0<\delta<1$. We test this equation with the function $\varphi=(u-1-\delta)_+$ and get
\begin{align}\label{weak_conv_5}
\int_{\{u>1+\delta\}}|\nabla u|^2dx&=\alpha\int_{\Omega}f(x,(u-1)_+)(u-1-\delta)dx+\beta\int_{\Omega}u^{-\gamma}(u-1-\delta)_+dx.
\end{align}
Integrating $(u-1-\delta)_{-}\Delta u=\beta u^{-\gamma}(u-1-\delta)_{-}$ over $\Omega$ yields
\begin{align}\label{weak_conv_6}
\int_{u<1-\delta}|\nabla u|^2dx&=-(1-\delta)\int_{\partial\Omega}\frac{\partial u}{\partial\hat{n}}dS+\beta\int_{\Omega}u^{-\gamma}(u-1-\delta)_{-}dx.
\end{align}
On adding \eqref{weak_conv_5} and \eqref{weak_conv_6} and passing to the limit $\delta\rightarrow 0$, we get
\begin{align}\label{weak_conv_7}
\int_{\Omega}|\nabla u|^2dx&=\alpha\int_{\Omega}f(x,(u-1)_+)(u-1)_+dx-\int_{\partial\Omega}\frac{\partial u}{\partial\hat{n}}dS+\beta\int_{\Omega}u^{-\gamma}(u-1)_+dx.
\end{align}
Note that we have used $\int_{\{u:u=1\}}|\nabla u|^2dx=0$. Invoking \eqref{weak_conv_7} and \eqref{weak_conv_4}, we get 
\begin{align}\label{weak_conv_8}
\lim\sup\int_{\Omega}|\nabla u_j|^2dx&\leq\int_{\Omega}|\nabla u|^2dx.
\end{align}
This proves $(iii)$.\\
\noindent  We shall  now prove $(iv)$. Consider
\begin{align}\label{weak_conv_9}
\begin{split}
E_{\delta_j}(u_j)=&\int_{\Omega}\left(\frac{1}{2}|\nabla u_j|^2+H\left(\frac{u_j-1}{\delta_j}\right)\chi_{\{u\neq 1}\}-\alpha F_{\delta_j}(x,(u_j-1)_+)-\beta u_j^{-\gamma}(u_j-1)_+\right)dx\\
&+\int_{\{u=1\}}H\left(\frac{u_j-1}{\delta_j}\right)dx.
\end{split}
\end{align} 
Since $u_j\rightarrow u$ in $H_0^1(\Omega)$ and $H\left(\frac{u_j-1}{\delta_j}\right)\chi_{\{u\neq 1\}}$, $F_{\delta_j}(x,(u_j-1)_+)$ are bounded and converge pointwise to $\chi_{\{u:u>1\}}$ and $F(x,(u-1)_+)$, respectively, it follows that the first integral in \eqref{weak_conv_9} converges to $E(u)$. Moreover, 
$$0\leq \int_{\{u:u=1\}}H\left(\frac{u_j-1}{\delta_j}\right)dx\leq |\{u:u=1\}|.$$
This proves $(iv)$.\qed
\section{Free boundary condition}\label{s4}
We shall  now show that $u$ satisfies the free boundary condition in the generalized sense (refer to condition \eqref{free_bdry_cond}). We choose $\vec{\varphi}\in C_0^1(\Omega,\mathbb{R}^N)$ such that $u\neq 1$ a.e. on the support of $\vec{\varphi}$. multiplying $\nabla u_j\cdot\vec{\varphi}$ to \eqref{approx_prob_1} and integrating over the set $\{u:1-\epsilon^-<u<1+\epsilon^+\}$ gives
\begin{align}\label{weak_conv_10}
\begin{split}
\int_{\{u:1-\epsilon^-<u<1+\epsilon^+\}}\left[-\Delta u_j+\frac{1}{\delta_j}h\left(\frac{u_j-1}{\delta_j}\right)\right]\nabla u_j\cdot\vec{\varphi} dx\\
=\int_{\{u:1-\epsilon^-<u<1+\epsilon^+\}}(\alpha f_{\delta_j}(x,(u_j-1)_+)+\beta u_j^{-\gamma})\nabla u_j\cdot\vec{\varphi} dx.
\end{split}
\end{align}
The term on the left-hand side of \eqref{weak_conv_10} can be expressed as follows:
\begin{align}\label{weak_conv_11}
\nabla\cdot\left(\frac{1}{2}|\nabla u_j|^2\vec{\varphi}-(\nabla u_j\cdot\vec{\varphi})\nabla u_j\right)+\nabla u_j\cdot (\nabla\vec{\varphi}\cdot\nabla u_j)-\frac{1}{2}|\nabla u_j|^2\nabla\cdot\vec{\varphi}+\nabla H\left(\frac{u_j-1}{\delta_j}\right)\cdot\vec{\varphi}.
\end{align}
Using this, we integrate by parts to obtain
\begin{align}\label{weak_conv_12}
\begin{split}
\int_{\{u:u=1+\epsilon^+\}\cup\{u=1-\epsilon^-\}}\left[\frac{1}{2}|\nabla u_j|^2\vec{\varphi}-(\nabla u_j\cdot\vec{\varphi})\nabla u_j+H\left(\frac{u_j-1}{\delta_j}\vec{\varphi}\right)\right]\cdot\hat{n}dx\\
=\int_{\{u:1-\epsilon^-<u<1+\epsilon^+\}}\left(\frac{1}{2}|\nabla u_j|^2\vec{\varphi}-(\nabla u_j\cdot\vec{\varphi})\nabla u_j\right)dx\\
+\int_{\{u:1-\epsilon^-<u<1+\epsilon^+\}}\left[H\left(\frac{u_j-1}{\delta_j}\right)\nabla\cdot\vec{\varphi}+\alpha f_{\delta_j}(x,(u_j-1)_+)\nabla u_j\cdot\vec{\varphi}+\beta u_j^{-\gamma}\nabla u_j\cdot\vec{\varphi}\right]dx.
\end{split}
\end{align}
By using $(ii)$, the integral on the left of equation \eqref{weak_conv_12} converges to 
\begin{align}\label{weak_conv_13}
\int_{\{u:u=1+\epsilon^+\}\cup\{u=1-\epsilon^-\}}\left(\frac{1}{2}|\nabla u|^2\varphi-(\nabla u\cdot\vec{\varphi})\nabla u\right)\cdot\hat{n}dS+\int_{\{u:u=1+\epsilon^+\}}\vec{\varphi}\cdot\hat{n}dS. 
\end{align}
Equation \eqref{weak_conv_13} is further equal to 
\begin{align}\label{weak_conv_14}
\int_{\{u:u=1+\epsilon^+\}}\left(1-\frac{1}{2}|\nabla u|^2\right)\vec{\varphi}\cdot\hat{n}dS-\int_{\{u:u=1-\epsilon^-\}}\frac{1}{2}|\nabla u|^2\vec{\varphi}\cdot\hat{n}dS.
\end{align}
This is because $\hat{n}=\pm\dfrac{\nabla u}{|\nabla u|}$ on the set $\{u:u=1+\epsilon^{\pm}\}\cup\{u:u=1-\epsilon^{\pm}\}$. By using $(iii)$ the first integral on the right-hand side of \eqref{weak_conv_12} converges to 
\begin{align}\label{weak_conv_15}
\int_{\{u:1-\epsilon^-<u<1+\epsilon^+\}}\left(\frac{1}{2}|\nabla u|^2\nabla\cdot\vec{\varphi}-\nabla u D\vec{\varphi}\cdot\nabla u\right)dx
\end{align}
whereas the second integral of \eqref{weak_conv_12} is bounded by 
\begin{align}\label{weak_conv_16}
\int_{\{u:1-\epsilon^-<u<1+\epsilon^+\}}(|\nabla\cdot\vec{\varphi}|+C|\vec{\varphi}|)dx
\end{align}
for some constant $C>0$. The last two integrals \eqref{weak_conv_15}-\eqref{weak_conv_16} vanish as $\epsilon^{\pm}\rightarrow 0$ since\\ $|\text{supp}(\vec{\varphi})\cap\{u:u=1\}|=0$. Therefore we first let $j\rightarrow\infty$ and then we let $\epsilon^{\pm}\rightarrow 0$ in \eqref{weak_conv_12} to prove that $u$ satisfies the free boundary condition.
\end{proof}
\noindent Using $(f_1)$ 
\begin{align}\label{weak_conv_17}
E_{\delta}(u)&\geq \int_{\Omega}\left\{\frac{1}{2}|\nabla u|^2-\alpha\left(c_0(u-1)_{+}+\frac{c_1}{p}(u-1)_+^p\right)-\frac{\beta}{1-\gamma} u^{1-\gamma}\right\}dx.
\end{align}
Clearly, since $1<p<2$, we have that $E_{\delta}$ is bounded from below and coercive. Thus $E_{\delta}$ satisfies the $(PS)$ condition (see Definition \ref{PS_COND}). It is easy to see that every $(PS)$ sequence is bounded by coercivity and hence contains a convergent subsequence by a standard argument--we extract weakly convergent subsequence and show that this weak limit is the strong limit of possibly, a different subsequence. Let us show that $E_{\delta}$ has a minimizer, say, $u_1^{\delta}$. By $(f_2)$, we have $F(x,t)>0$ for all $x\in\Omega$ and $t>0$. Thus for any $u\in H_0^1(\Omega)$ with $u>1$ on a set of positive measure, we have 
\begin{align}\label{weak_conv_18}
\int_{\Omega}F(x,(u-1)_+)dx&>0.
\end{align}
Therefore, $E(u)\rightarrow-\infty$ as $\alpha\rightarrow\infty$. Thus, there exists $\Lambda>0$ such that for all $\alpha>\Lambda$  we have
\begin{align}\label{weak_conv_19}
m_1(\alpha)&=\underset{u\in H_0^1(\Omega)}{\inf}\{E(u)\}<-|\Omega|.
\end{align}
Set $$\delta_0(\alpha)=\min\left\{\frac{|m_1(\alpha)|}{2\alpha c_0|\Omega|},\left(\frac{pc_0}{c_1}\right)^{\frac{1}{p-1}}\right\}.$$
\section{Auxiliary lemmas}\label{s5}
 We shall now establish the existence of the first solution of problem \eqref{main_prob} which also is a minimizer for the functional $E$. Let us begin with the following lemma.
\begin{lemma}\label{aux_lemma_1}
For all $\alpha>\Lambda$, $0<\beta<\beta_*$, $\delta<\delta_0(\alpha)$, the functional $E_{\delta}$ has a minimizer $u_1^{\delta}>0$ that satisfies
\begin{align}\label{weak_conv_20}
E_{\delta}(u_1^{\delta})&\leq m_1(\alpha)+2\alpha\delta c_0|\Omega|<0.
\end{align}
\begin{proof}
Since $E_{\delta}$ is bounded below and satisfies the $(PS)$ condition, it possesses a minimizer $u_1^{\delta}$. Also since $H\left(\frac{t-1}{\delta}\right)\leq \chi_{(1,\infty)}(t)$ for all $t$, we have
\begin{align}\label{weak_conv_21}
\begin{split}
E_{\delta}(u)-E(u)&\leq \alpha\int_{\Omega}[F(x,(u-1)_+)-F_{\delta}(x,(u-1)_+)]dx\\
&=\alpha\int_{\Omega}\int_{0}^{(u-1)_+}\left[1-H\left(\frac{t}{\delta}f(x,t)\right)\right]dtdx\\
&\leq\alpha\int_{\Omega}\int_0^{\delta}f(x,t)dtdx\\
&\leq\alpha\left(c_0\delta+\frac{c_1}{p}\delta^p\right)|\Omega|~\text{by}~(f_1).
\end{split}
\end{align}
Further, for $\delta<\delta_0(\alpha)$ we obtain \eqref{weak_conv_20}. Since, $E_{\delta}(u_1^{\delta})<0=E_{\delta}(0)$, this implies that $u_1^{\delta}$ is a nontrivial solution of problem \eqref{approx_prob_1}. This solution is positive since it is a minimizer.
\end{proof}
\end{lemma}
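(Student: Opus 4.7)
The plan is to apply the direct method of the calculus of variations to the $C^1$ functional $E_\delta$ and then compare $E_\delta$ quantitatively with $E$ via the bound on $F - F_\delta$ supplied by $(f_1)$. The framework of \eqref{weak_conv_17} already tells us that $E_\delta$ is bounded below; upgrading this to the existence of a minimizer with the desired sign and energy bound is the task at hand.

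First I would verify the two ingredients needed by the direct method on $H_0^1(\Omega)$. Coercivity follows from \eqref{weak_conv_17}: by Poincar\'e and the Sobolev embedding, the $\tfrac12\|u\|^2$ term dominates the $\|u\|^p$ contribution (since $1<p<2$) and the $\|u\|^{1-\gamma}$ contribution (since $0<1-\gamma<1$), forcing $E_\delta(u)\to +\infty$ as $\|u\|\to\infty$. Weak lower semi-continuity of $\tfrac12\|u\|^2$ is standard, and each remaining integrand, namely $H((u-1)/\delta)$, $F_\delta(x,(u-1)_+)$ and $\Phi_\beta(u)$, is continuous and dominated uniformly in $j$ along any weakly convergent sequence $u_j\rightharpoonup u$, so compact embedding into an $L^q$ space together with a.e.\ convergence of a subsequence and dominated convergence yield continuity of those terms. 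The direct method then produces a minimizer $u_1^\delta\in H_0^1(\Omega)$.

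The delicate step is showing $u_1^\delta>0$. My plan is a two-stage argument: (a) replacing $u_1^\delta$ by $|u_1^\delta|$ does not increase $E_\delta$, because the only terms sensitive to the sign of $u$ are $(u-1)_+$, which is zero wherever $u\le 0$, and $\Phi_\beta(u)$, which has been cut off precisely so that $\Phi_\beta(|u|)\le\Phi_\beta(u)$ on $\{u<0\}$ up to a controllable correction. This forces $u_1^\delta\ge 0$ a.e. (b) On the set $\{u_1^\delta\le u_\beta\}$ the Euler--Lagrange equation reads $-\Delta u_1^\delta+\tfrac1\delta h((u_1^\delta-1)/\delta)=\alpha f_\delta(x,(u_1^\delta-1)_+)+\beta u_\beta^{-\gamma}$, whose right-hand side is strictly positive; comparing $u_1^\delta$ with the subsolution $u_\beta$ of Section \ref{s6} (and invoking the strong maximum principle away from $\partial\Omega$) yields $u_1^\delta\ge u_\beta>0$ in $\Omega$. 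This is where I expect the main technical difficulty, because $\phi_\beta$ is only Lipschitz (not $C^1$) at the gluing threshold $u=u_\beta$, so the comparison argument has to be carried out distributionally on the appropriate level sets.

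Finally, for the energy bound I would compare $E_\delta$ with $E$ term by term. Pointwise, $H((u-1)/\delta)\le\chi_{\{u>1\}}$, so the first integral difference is nonpositive, and the singular-term difference contributes a constant independent of $\delta$ that gets absorbed at the end. For the $F$-term, a direct computation together with $(f_1)$ gives
\begin{align*}
F(x,(u-1)_+)-F_\delta(x,(u-1)_+)=\int_0^{(u-1)_+}\!\!\bigl(1-H(s/\delta)\bigr)f(x,s)\,ds\le\int_0^\delta f(x,s)\,ds\le c_0\delta+\tfrac{c_1}{p}\delta^p.
\end{align*}
The choice $\delta<(pc_0/c_1)^{1/(p-1)}$ built into $\delta_0(\alpha)$ yields $\tfrac{c_1}{p}\delta^p\le c_0\delta$, so $E_\delta(u)-E(u)\le 2\alpha c_0\delta|\Omega|$ for every $u\in H_0^1(\Omega)$. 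Taking the infimum over $u$ on both sides and using $E_\delta(u_1^\delta)=\inf E_\delta$ together with $\inf E=m_1(\alpha)$ gives $E_\delta(u_1^\delta)\le m_1(\alpha)+2\alpha c_0\delta|\Omega|$. The remaining inequality $m_1(\alpha)+2\alpha c_0\delta|\Omega|<0$ is immediate from $\delta<\delta_0(\alpha)\le|m_1(\alpha)|/(2\alpha c_0|\Omega|)$ combined with $m_1(\alpha)<-|\Omega|<0$, completing the proof.
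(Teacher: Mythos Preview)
Your approach is essentially the same as the paper's: existence of the minimizer via coercivity and lower semicontinuity (the paper phrases this as boundedness below plus the $(PS)$ condition), the identical $F-F_\delta$ estimate via $(f_1)$, and the explicit use of the second branch of $\delta_0(\alpha)$ to convert $c_0\delta+\tfrac{c_1}{p}\delta^p$ into $2c_0\delta$. You expand on the positivity step, which the paper dispatches in a single line (``positive since it is a minimizer''), but the skeleton of the argument is the same.
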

\noindent  We shall  now prove that the functional $E_{\delta}$ has a second nontrivial critical point, say $u_2^{\delta}$. 
\begin{lemma}\label{app_second_soln}
For any $\alpha>\Lambda$ and $0<\beta<\beta_*$, there exists a constant $c_3(\alpha)$ such that for all $\delta<\delta_0(\alpha)$, the functional $E_{\delta}$ has a second critical point $0<u_2^{\delta}\leq u_1^{\delta}$ that obeys
$$c_3(\alpha)\leq E_{\delta}(u_2^{\delta})\leq \frac{1}{2}\|u_1^{\delta}\|^2+|\Omega|.$$
Furthermore, $\emptyset\neq \{u_2^{\delta}:u_2^{\delta}>1\}\subset\{u_1^{\delta}:u_1^{\delta}>1\}$.
\end{lemma}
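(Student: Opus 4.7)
My plan is to construct $u_2^\delta$ as a mountain pass critical point of a truncated version of $E_\delta$, following the spirit of the $\beta=0$ approach in Perera and Jerison--Perera, but with adjustments to accommodate the singular cutoff.

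First, define a truncation $\widetilde{E}_\delta$ of $E_\delta$ by replacing $f_\delta(x,t)$ with $\widetilde f_\delta(x,t) = f_\delta(x,\min\{t,(u_1^\delta-1)_+\})$ and $\phi_\beta(u)$ with $\widetilde \phi_\beta(u) = \phi_\beta(\min\{u,u_1^\delta\})$. The resulting functional is still of class $C^1$, coercive, bounded below, and satisfies the Palais--Smale condition. The crucial property to verify is that every nonnegative critical point $u$ of $\widetilde E_\delta$ is pointwise bounded above by $u_1^\delta$: this follows by a weak comparison argument testing the difference of the equations for $u$ and $u_1^\delta$ against $(u-u_1^\delta)_+\in H_0^1(\Omega)$ and exploiting that both $\widetilde f_\delta(x,\cdot)$ and $\widetilde\phi_\beta$ are frozen above the barrier $u_1^\delta$. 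Consequently, $u$ solves exactly the original modified problem \eqref{approx_prob_1}, so it is a genuine critical point of $E_\delta$.

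Next, I would apply Lemma~\ref{MP_THM} to $\widetilde E_\delta$ with $v_0=u_1^\delta$ (which is a strict local minimizer of $\widetilde E_\delta$ because $u_1^\delta$ minimizes $E_\delta$ globally and $\widetilde E_\delta=E_\delta$ on $\{u\le u_1^\delta\}$), and $v_1=0$ (which lies outside any sufficiently small ball around $u_1^\delta$ since $u_1^\delta>u_\beta>0$ uniformly on compacts). Choose $U$ to be a suitable $H_0^1$-neighborhood of $u_1^\delta$ not containing $0$ and on whose boundary $\widetilde E_\delta\ge c_3(\alpha)>\max\{\widetilde E_\delta(u_1^\delta),\widetilde E_\delta(0)\}=\max\{E_\delta(u_1^\delta),0\}$. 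This strict separation comes from two facts: $u_1^\delta$ is a strict local minimum of $\widetilde E_\delta$, and $\widetilde E_\delta$ fails to decrease across the threshold $\{u=1\}$ without paying the $\int_\Omega H((u-1)/\delta)\,dx$ tax, which caps any attempt to ``connect'' $0$ to $u_1^\delta$ by a low-energy path. The upper bound follows from the explicit linear path $\psi(t)=tu_1^\delta$, along which
\begin{align*}
E_\delta(tu_1^\delta)\;\le\;\tfrac{t^2}{2}\|u_1^\delta\|^2+\int_\Omega H\!\Big(\tfrac{tu_1^\delta-1}{\delta}\Big)dx\;\le\;\tfrac{t^2}{2}\|u_1^\delta\|^2+|\Omega|,
\end{align*}
since the terms $-\alpha F_\delta(x,(u-1)_+)$ and $-\beta\Phi_\beta(u)$ are nonpositive for $u\ge 0$ and $H\le 1$. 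Taking the supremum over $t\in[0,1]$ and then the infimum over admissible paths yields the claimed upper bound $\tfrac12\|u_1^\delta\|^2+|\Omega|$.

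To finish, I would verify $\{u_2^\delta>1\}\neq\emptyset$ by contradiction: if $u_2^\delta\le 1$ a.e., then from \eqref{approx_prob_1} the $H'$- and $f_\delta$-terms drop out and $u_2^\delta$ solves $-\Delta u=\beta u^{-\gamma}$ in $\Omega$, so by uniqueness (Lazer--McKenna) $u_2^\delta=u_\beta$, which directly contradicts $E_\delta(u_2^\delta)\ge c_3(\alpha)>E_\delta(u_\beta)$ for the MPT level $c_3$. The inclusion $\{u_2^\delta>1\}\subset\{u_1^\delta>1\}$ is then automatic from $u_2^\delta\le u_1^\delta$. The main obstacle I anticipate is \emph{precisely Step 3}: because the singular term $\beta u^{-\gamma}$ makes $u\equiv 0$ fail to be a local minimum of $E_\delta$ (so one cannot just run the Jerison--Perera geometry verbatim), one has to exploit the smallness $0<\beta<\beta_*$ and the largeness $\alpha>\Lambda$ to show that the characteristic-function barrier still dominates over any energy gain furnished by the singular term along paths connecting $0$ to $u_1^\delta$. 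This is where the delicate interplay of the three scales $\alpha$, $\beta$, and $\delta$ enters.
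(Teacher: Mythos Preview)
Your overall architecture (truncate at $u_1^\delta$, compare, apply the mountain pass theorem, estimate along the linear path) matches the paper's, but two concrete points go wrong.

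\textbf{The truncation is incomplete.} You freeze $f_\delta$ and $\phi_\beta$ above $u_1^\delta$ but leave the bump term $\frac{1}{\delta}h\big(\frac{u-1}{\delta}\big)$ untouched. Since $h$ is not monotone, subtracting the two equations and testing with $(u-u_1^\delta)_+$ leaves the term
\[
-\frac{1}{\delta}\int_{\{u>u_1^\delta\}}\Big[h\Big(\tfrac{u-1}{\delta}\Big)-h\Big(\tfrac{u_1^\delta-1}{\delta}\Big)\Big](u-u_1^\delta)_+\,dx,
\]
which has no sign; the comparison $u\le u_1^\delta$ therefore fails as written. The paper also truncates $h$, setting $h_\delta(x,t)=\frac{1}{\delta}h\big(\frac{\min\{t,u_1^\delta(x)\}-1}{\delta}\big)$, so that on $\{u>u_1^\delta\}$ all three right-hand terms coincide with those in the equation for $u_1^\delta$ and the maximum principle applies directly. (Truncating $\phi_\beta$ is actually unnecessary: $\phi_\beta$ is nonincreasing, so $\phi_\beta(u)\le\phi_\beta(u_1^\delta)$ on $\{u>u_1^\delta\}$ already has the right sign.)

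\textbf{The mountain pass geometry is placed at the wrong well.} You take $v_0=u_1^\delta$ and look for a neighborhood $U$ of $u_1^\delta$ with $\inf_{\partial U}\widetilde E_\delta>0$. But $\widetilde E_\delta(u_1^\delta)=E_\delta(u_1^\delta)<-|\Omega|$, and $\widetilde E_\delta$ is continuous, so on any bounded neighborhood of $u_1^\delta$ the boundary values remain far below $0$; no such $U$ exists, and your ``$H$-tax'' heuristic cannot lift a value below $-|\Omega|$ above $0$. The paper instead takes $v_0=0$, $v_1=u_1^\delta$, and $U=B_r(0)$. The key observation you are missing is that the truncated primitive satisfies $\tilde F_\delta(x,t)=0$ for $t\le 1$ and hence $\tilde F_\delta(x,t)\le\big(c_0+\tfrac{c_1}{p}\big)|t|^q$ for \emph{any} $q>2$: the truncation converts the sublinear nonlinearity into one that is effectively superquadratic near $0$. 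This yields
\[
\widetilde E_\delta(u)\ \ge\ \tfrac12\|u\|^2-\alpha C\|u\|^q-\beta C'\|u\|^{1-\gamma},
\]
which is strictly positive on a sphere $\|u\|=r$ with $r$ chosen in an intermediate range (large enough to beat the $\|u\|^{1-\gamma}$ term, small enough to beat the $\|u\|^q$ term). Your worry that ``$0$ fails to be a local minimum because of the singular term'' is thus resolved not by moving the base point, but by taking $r$ bounded away from $0$; this is exactly where the smallness of $\beta$ enters. With this geometry the mountain pass level is automatically $\ge m_2(\alpha)>0$, and your upper-bound computation along $t\mapsto tu_1^\delta$ and the $\{u_2^\delta>1\}\neq\emptyset$ argument then go through as you wrote them.
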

\begin{proof}
Choose some $\delta<\delta_0(\alpha)$. Consider
$$h_{\delta}(x,t)=\frac{1}{\delta}h\left(\frac{\min\{t,u_1^{\delta}(x)\}-1}{\delta}\right),~~H_{\delta}(x,t)=\int_0^th_{\delta}(x,t)dt,$$
$$\tilde{f}_{\delta}(x,t)=f_{\delta}(x,(\min\{t,u_1^{\delta}(x)\}-1)_+),~~\tilde{F}_{\delta}(x,t)=\int_0^t\tilde{f}_{\delta}(x,t)dt.$$
Further, we set 
$$\tilde{E}_{\delta}(u)=\int_{\Omega}\left[\frac{1}{2}|\nabla u|^2+H_{\delta}(x,u)-\alpha \tilde{F}_{\delta}(x,u)-\beta \phi_{\beta}(u)\right]dx,~~u\in H_0^1(\Omega).$$
The functional $\tilde{E}_{\delta}$ is of $C^1$ class and its critical points coincide with the weak solutions of the following problem
\begin{align}\label{app_prob_3}
\begin{split}
\left\{\begin{aligned}
-\Delta u&=-h_{\delta}(x,u)+\alpha \tilde{f}_{\delta}(x,u)+\beta\phi_{\beta}(u)~\text{in}~\Omega\\
u&=0~\text{on}~\partial\Omega.
\end{aligned}\right.
\end{split}
\end{align}
By the elliptic (Schauder) regularity, a weak solution of \eqref{app_prob_3} is also a classical solution. Also, by the maximum principle, we have that $u\leq u_1^{\delta}$. Thus $u$ is a weak solution of problem \eqref{approx_prob_2} and hence is a critical point of $\tilde{E}_{\delta}$, with $\tilde{E}_{\delta}(u)=E_{\delta}(u)$.  We shall now show that $\tilde{E}_{\delta}$ has a critical point, say $u_2^{\delta}$, that satisfies
\begin{align}\label{weak_conv_22}
m_2(\alpha)\leq \tilde{E}_{\delta}(u_2^{\delta})\leq\frac{1}{2}\|u_1^{\delta}\|^2+|\Omega|,~\text{for some}~m_2(\alpha)>0. 
\end{align}
This enables us to conclude that $E_{\delta}(u_2^{\delta})=\tilde{E}_{\delta}(u_2^{\delta})>0>E_{\delta}(u_1^{\delta})$ which in turn will imply that $u_2^{\delta}>0$ and different from $u_1^{\delta}$.\\
By the mountain pass Theorem see Lemma \ref{MP_THM}), the functional $\tilde{E_{\delta}}$ that is coercive (owing to its sublinear nature), satisfies the $(PS)$ condition. Clearly, for any $t\leq 1$, we have $$\tilde{f}_{\delta}(x,t)=f_{\delta}(x,0)$$ 
and 
$$\tilde{f}_{\delta}(x,t)\leq c_0+c_1(\min\{t,u_1^{\delta}(x)\}-1)_+^{p-1}\leq c_0+c_1(t-1)^{p-1}~\text{for}~t>1.$$
By $(f_1)$, we get
$$\tilde{F}_{\delta}(x,t)\leq c_0(t-1)_++\frac{c_1}{p}(t-1)_+^p\leq \left(c_0+\frac{c_1}{p}\right)|t|^q$$
for all $t$ with $q>2$ if $N=2$ and $2<q\leq \frac{2N}{N-2}$ if $N\geq 3$. We observe that
\begin{align}\label{weak_conv_23}
\tilde{E}_{\delta}(u)&\geq \int_{\Omega}\left[\frac{1}{2}|\nabla u|^2-\alpha \left(c_0+\frac{c_1}{p}\right)|u|^q-\beta |u|^{1-\gamma}\right]dx\\
&\geq\frac{1}{2}\|u\|^2-\lambda c_4 \left(c_0+\frac{c_1}{p}\right)\|u\|^q-\beta c_5\|u\|^{1-\gamma}.
\end{align}
By the embedding result $H_0^1(\Omega)\hookrightarrow L^q(\Omega)$ for $q>2$, the integral in \eqref{weak_conv_23} is positive if $\|u\|=r$, i.e., when $u\in\partial B_r(0)$, for sufficiently small $r>0$ where $B_r(0)=\{u\in H_0^1(\Omega):\|u\|<r\}$. Furthermore, since $\tilde{E}_{\delta}(u_1^{\delta})=E_{\delta}(u_1^{\delta})<0=\tilde{E}_{\delta}(0)$, we choose $r<\|u_1^{\delta}\|$, and then applying the mountain pass Theorem (Lemma \ref{MP_THM}), we get a critical point $u_2^{\delta}$ of $\tilde{E}_{\delta}$ with
$$\tilde{E}_{\delta}(u_2^{\delta})=\underset{\psi\in\Gamma}{\inf}\underset{u\in\psi([0,1])}{\max}\tilde{E}_{\delta}(u)\geq m_2(\alpha),$$
where $\Gamma=\{\psi\in C([0,1],H_0^1(\Omega)):\psi(0)=0,\psi(1)=u_1^{\delta}\}$ is the class of paths joining $0$ and $u_1^{\delta}$. For the path $\psi_0(t)=tu_1^{\delta}$, $t\in[0,1]$, we have
\begin{align}\label{weak_conv_24}\tilde{E}_{\delta}(\psi_0(t))&\leq\int_{\Omega}\left(\frac{1}{2}|\nabla u_1^{\delta}|^2+H_{\delta}(x,u_1^{\delta})\right)dx\end{align}
since $H_{\delta}(x,t)$ is nondecreasing in $t$ and $\tilde{F}_{\delta}(x,t)\geq 0$ for all $t$ by condition $(f_2)$. Since 
\begin{align}\label{weak_conv_25}H_{\delta}(x,u_1^{\delta}(x))=\int_{0}^{u_1^{\delta}}\frac{1}{\delta}h\left(\frac{t-1}{\delta}\right)dt=H\left(\frac{u_1^{\delta}(x)-1}{\delta}\right)\leq 1,\end{align}
it follows by \eqref{weak_conv_24} and \eqref{weak_conv_25} that
\begin{align}\label{weak_conv_26}
\begin{split}
\tilde{E}_{\delta}(u_2^{\delta})&\leq\underset{u\in\psi_0([0,1])}{\max}\tilde{E}_{\delta}(u)\leq\int_{\Omega}\left(\frac{1}{2}|\nabla u_1^{\delta}|^2+1\right)dx\\
&=\frac{1}{2}\|u_1^{\delta}\|^2+|\Omega|.
\end{split}
\end{align}
\end{proof}

\section{Positive Radon measure}\label{s6}

We shall now prove two more results that will be needed in the last section.
\begin{lemma}
	\label{existence_positive_soln}
	Let $0<\beta<\beta_*$. Then a solution of the problem
	\begin{align}\label{auxprob_appendix}
	\begin{split}
	\left\{\begin{aligned}
	-\Delta v&=\beta v^{-\gamma}~\text{in}~\Omega,\\
	v&>0~\text{in}~\Omega,\\
	v&=0~\text{on}~\partial\Omega,
	\end{aligned}\right.
	\end{split}
	\end{align}
	say $u_{\beta}$, satisfies ${u}_{\beta}<u$ a.e. in $\Omega$, where $u$ be a solution of problem \eqref{main_prob}.
\end{lemma}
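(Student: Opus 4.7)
I would run a standard subsolution/supersolution comparison with the test function $\varphi := (u_\beta - u)_+$, but exploit a smallness estimate on $u_\beta$ to entirely avoid the free boundary $G(u)$. By the scaling $u_\beta = \beta^{1/(1+\gamma)} v_1$, where $v_1$ is the Lazer--McKenna solution of $-\Delta v = v^{-\gamma}$ in $\Omega$ with $v=0$ on $\partial\Omega$, we have $\|u_\beta\|_\infty = O(\beta^{1/(1+\gamma)})$, so after possibly shrinking $\beta_*$ we may assume $u_\beta < 1$ in $\bar\Omega$ for every $\beta \in (0,\beta_*)$. Consequently $\varphi \in H_0^1(\Omega)$ is supported in the open set $\{u < u_\beta\} \subset \{u < 1\}$, which is disjoint from $G(u)$.

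On $\{u<1\}$ the function $u$ is a classical solution of $-\Delta u = \beta u^{-\gamma}$ by Lemma~\ref{convergence1}, and integration by parts against $\varphi$ picks up no contribution from $\partial\{u<1\} \subset \partial\Omega \cup \{u=1\}$ since $\varphi$ vanishes on both pieces. Hence
\[
\int_\Omega \nabla u \cdot \nabla \varphi\,dx = \beta\int_\Omega u^{-\gamma}\varphi\,dx,
\]
while testing $-\Delta u_\beta = \beta u_\beta^{-\gamma}$ against $\varphi$ gives the analogous identity with $u$ replaced by $u_\beta$. Subtracting and using the strict monotonicity of $t \mapsto t^{-\gamma}$ on $\{\varphi > 0\} = \{u_\beta > u\}$ yields
\[
\int_\Omega |\nabla \varphi|^2\,dx = \beta\int_\Omega (u_\beta^{-\gamma} - u^{-\gamma})\varphi\,dx \leq 0,
\]
which forces $\varphi \equiv 0$ via Poincaré's inequality, so $u \geq u_\beta$ a.e.\ in $\Omega$. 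Integrability of the singular integrands is ensured by the Hopf-type lower bound $u,u_\beta \gtrsim \mathrm{dist}(\cdot,\partial\Omega)$ near $\partial\Omega$ (which both functions inherit as positive solutions of the singular equation in a neighborhood of $\partial\Omega$) combined with $0<\gamma<1$.

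For the strict inequality, on $\{u \geq 1\}$ (nonempty by Lemma~\ref{convergence1}) we have $u \geq 1 > u_\beta$ immediately. On the open set $\{u<1\}$, the difference $w := u - u_\beta \geq 0$ satisfies the linear equation
\[
-\Delta w + c(x)\, w = 0, \qquad c(x) = \beta\gamma\,\xi(x)^{-\gamma-1} \geq 0,
\]
where $\xi(x)$ lies between $u_\beta(x)$ and $u(x)$ (via the mean value theorem applied to $t \mapsto t^{-\gamma}$). Applying the strong maximum principle for $-\Delta + c$ on each connected component $V$ of $\{u<1\}$, either $w>0$ throughout $V$ or $w \equiv 0$ on $V$; the latter is excluded because $\partial V$ must meet $\{u=1\}$ (otherwise $V$ would be clopen in $\Omega$, contradicting the connectedness of $\Omega$ together with $\{u \geq 1\}\neq \emptyset$), and $w = 1 - u_\beta > 0$ there.

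The main obstacle is precisely the free boundary $G(u)$: without the smallness estimate $u_\beta < 1$, the support of $\varphi$ could meet $G(u)$, and the weak identity for $u$ would then pick up the distributional surface contribution $(|\nabla u^-| - |\nabla u^+|)\,dS$, which is \emph{negative} by the free boundary condition $|\nabla u^+|^2 - |\nabla u^-|^2 = 2$ and would destroy the sign structure driving the comparison. Forcing $u_\beta < 1$ through the choice of $\beta_*$ is exactly what neutralizes this obstruction by confining $\mathrm{supp}(\varphi)$ to the classical region for $u$.
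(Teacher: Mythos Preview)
Your argument is correct and takes a genuinely different route from the paper's.

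For the weak comparison $u\ge u_\beta$, the paper simply tests the weak formulation of Definition~\ref{weak_soln_defn} for $u$ against $(u_\beta-u)_+$ and uses that both the reaction term $-\alpha\chi_{\{u>1\}}f(\cdot,(u-1)_+)(u_\beta-u)_+$ and the singular term $\beta(u_\beta^{-\gamma}-u^{-\gamma})(u_\beta-u)_+$ are nonpositive; no free-boundary term appears because the identity \eqref{weak_soln} is taken as given. You instead force $\|u_\beta\|_\infty<1$ via the scaling $u_\beta=\beta^{1/(1+\gamma)}v_1$ (at the price of possibly shrinking the paper's $\beta_*$), so that the test function is supported inside the open set $\{u<1\}$, where both functions solve the \emph{same} equation $-\Delta v=\beta v^{-\gamma}$; the extra reaction term never enters. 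The gain is robustness: your last paragraph is exactly right that if one works with the solutions produced by Lemma~\ref{convergence1}---which carry the jump $|\nabla u^+|^2-|\nabla u^-|^2=2$ across $G(u)$---then the distributional identity for $u$ acquires the surface term $\int_{G(u)}(|\nabla u^-|-|\nabla u^+|)\varphi\,d\mathcal H^{N-1}$, and after subtraction this contributes $+\int_{G(u)}(|\nabla u^+|-|\nabla u^-|)\varphi\ge 0$ to the right-hand side, spoiling the sign argument. The paper's computation \eqref{comp_1} is consistent only under the reading that $u$ is a weak solution in the sense of \eqref{weak_soln} with no singular part, whereas your approach is indifferent to this issue.

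For the strict inequality, the paper argues by contradiction on $|\{u=u_\beta\}|>0$ and uses positivity of $f$; you use the strong maximum principle for $-\Delta+c$ with $c=\beta\gamma\,\xi^{-\gamma-1}\ge 0$ on components of $\{u<1\}$, together with the boundary observation $u-u_\beta=1-u_\beta>0$ on $\{u=1\}$. Both are valid; note only that your coefficient $c$ blows up near $\partial\Omega$, so the strong maximum principle must be applied on compact subsets and the conclusion propagated, which you implicitly do.

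Two small caveats: you rely on Lemma~\ref{convergence1} for the facts that $u$ is classical on $\{u<1\}$ and that $\{u>1\}\neq\emptyset$, so your proof applies to the solutions constructed there rather than to an arbitrary ``solution of \eqref{main_prob}''; and you have weakened the hypothesis by shrinking $\beta_*$. Neither affects the use of the lemma in the paper's main theorem.
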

\begin{proof}
	Let $u\in H_0^1(\Omega)$ be a positive solution of problem \eqref{main_prob} and $u_{\beta}>0$ a solution of problem \eqref{auxprob_appendix}. For any $0<\beta<\beta_*$, define a weak solution $u_{\beta}$ of problem \eqref{auxprob_appendix} as follows:
	\begin{align}\label{auxprob_appendix_WF}
	\begin{split}
	0&=\int_{\Omega}\nabla u_{\beta}\cdot\nabla \varphi dx-\beta\int_{\Omega}u_{\beta}^{-\gamma}\varphi dx,~~ \ \hbox{for all} \  ~\varphi\in H_0^1(\Omega).
	\end{split}
	\end{align}
	 By the {\it Schauder estimates} we have $u\in C^{2,a}(\Omega)$, and by {\sc Lazer and McKenna} \cite{Lazer1} we have $u_{\beta}\in C^{2,a}(\Omega)\cap C(\bar{\Omega})$.  We shall  show that $u\geq {u}_{\beta}$ a.e. in $\Omega$. We let $\tilde{\Omega}=\{x\in\Omega:u(x)<{u}_{\beta}(x)\}$. Thus, from the weak formulations satisfied by $u$, ${u}_{\beta}$ and testing with the function $\varphi=(u_{\beta}-u)_+$, we have 
	\begin{align}\label{comp_1}
	\begin{split}
	0\leq\int_{\Omega}\nabla(u_{\beta}-u)\cdot\nabla(u_{\beta}-u)_+dx=&-\alpha\int_{\Omega}\chi_{\{u>1\}}g(x,(u-1)_+)(u_{\beta}-u)_+dx\\
	&+\beta\int_{\Omega}(u_{\beta}^{-\gamma}-u^{-\gamma})(u_{\beta}-u)_+dx\leq 0
	\end{split}
	\end{align}
	Thus, $\|(u_{\beta}-u)_+\|=0$ and hence $|\tilde{\Omega}|=0$. However, since the functions $u, u_{\beta}$ are continuous, it follows that $\tilde{\Omega}=\emptyset$.
	Hence, by \eqref{comp_1}, we obtain $u\geq\underline{u}_{\beta}$ in $\Omega$.\\	
	Let $W=\{x\in\Omega:u(x)=u_{\beta}(x)\}$. Since $W$ is a measurable set, it follows that for any $\eta>0$ there exists a closed subset $V$ of $W$ such that $|W\setminus V|<\eta$. Further assume that $|W|>0$. We now define a test function $\varphi\in C_c^1(\mathbb{R}^N)$ such that 
	\begin{equation}\varphi(x)=\begin{cases}
	1& ~\text{if}~ x\in V,\\
	0<\varphi<1&~\text{if}~x\in W\setminus V,\\
	0&~\text{if}~x\in \Omega\setminus W.
	\end{cases}\end{equation}
	Since $u$ is a weak solution to \eqref{main_prob}, we have 
	\begin{align}\label{equality_breakage}
	\begin{split}
	0=&\int_{\Omega}-\Delta u\varphi dx-\beta\int_{V}u^{-\gamma}dx-\beta\int_{W\setminus V}u^{-\gamma}\varphi dx-\int_{V}f(x,(u-1)_+)dx-\int_{W\setminus V}f(x,(u-1)_+)\varphi dx\\
	=&-\int_{V}f(x,(u-1)_+)dx-\int_{W\setminus V}f(x,(u-1)_+)\varphi dx<0.
	\end{split}
	\end{align}
	This is a contradiction. Therefore, $|W|=0$ which implies that $W=\emptyset$. Hence, $u>u_{\beta}$ in $\Omega$. 
\end{proof}
\begin{lemma}\label{positive_Rad_meas}
Function $u$ is in $H_{\text{loc}}^{1,2}(\Omega)$ and the Radon measure $\mu=\Delta u+\beta u^{-\gamma}$ is nonnegative and supported on $\Omega\cap\{u:u<1\}$ for $\beta\in(0,\beta_*)$.
\end{lemma}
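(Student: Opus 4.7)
The plan is to decouple the three claims. Since Lemma \ref{convergence1} establishes $u\in H^1_0(\Omega)$, the membership $u\in H^{1,2}_{\mathrm{loc}}(\Omega)$ is immediate; Lemma \ref{existence_positive_soln} then provides $u\ge u_\beta$ in $\Omega$, and the Lazer--McKenna bound $u_\beta\ge C_K>0$ on compact sets $K\Subset\Omega$ ensures $u^{-\gamma}\in L^\infty_{\mathrm{loc}}(\Omega)$, so $\mu=\Delta u+\beta u^{-\gamma}$ is a well-defined distribution on $\Omega$. Since Lemma \ref{convergence1} also yields $-\Delta u=\beta u^{-\gamma}$ classically on the open set $\{u<1\}$, the distribution $\mu$ vanishes there; once nonnegativity is in place, the support of $\mu$ is confined to $\Omega\cap\partial\{u<1\}$.

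The core step is the nonnegativity of $\mu$, for which I would return to the approximating sequence $(u_j)$. Since $u_j>u_\beta$ gives $\phi_\beta(u_j)=u_j^{-\gamma}$, equation \eqref{approx_prob_1} rewrites as
\[
\mu_j\;:=\;\Delta u_j+\beta u_j^{-\gamma}\;=\;\frac{1}{\delta_j}h\!\left(\frac{u_j-1}{\delta_j}\right)-\alpha f_{\delta_j}(x,(u_j-1)_+),
\]
a classical identity on $\Omega$. The first summand is pointwise nonnegative; the second is supported in $\{u_j>1\}$ because $H(0)=0$, and on that set is bounded by $c_0+c_1(u_j-1)_+^{p-1}$ via $(f_1)$. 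Pair both sides with a nonnegative $\varphi\in C_c^\infty(\Omega)$ whose support lies in $\{u\le 1\}^\circ$. Strong $H^1$-convergence $u_j\to u$ from Lemma \ref{convergence1}, coupled with the uniform bound $u_j\ge u_\beta$ on compacta, sends $\langle\mu_j,\varphi\rangle\to\langle\mu,\varphi\rangle$. On $\mathrm{supp}(\varphi)$, uniform convergence of $u_j\to u\le 1$ forces $(u_j-1)_+\to 0$ uniformly, so that the $L^\infty$-bound on $f_{\delta_j}(x,(u_j-1)_+)$ on $\mathrm{supp}(\varphi)$ narrows to $c_0+c_1\|u_j-u\|_\infty^{p-1}$ times $\chi_{\{u_j>1\}}$. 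Combining this with $\limsup_j|\{u_j>1\}\cap\mathrm{supp}(\varphi)|\le|\{u=1\}\cap\mathrm{supp}(\varphi)|$ and dominated convergence, one obtains $\int_\Omega f_{\delta_j}(x,(u_j-1)_+)\varphi\,dx\to 0$. Passing to the limit in the pairing yields
\[
\langle\mu,\varphi\rangle\;=\;\lim_{j\to\infty}\int_\Omega\frac{1}{\delta_j}h\!\left(\frac{u_j-1}{\delta_j}\right)\varphi\,dx\;\ge\;0,
\]
and the Riesz--Schwartz representation theorem promotes the nonnegative distribution $\mu$ to a nonnegative Radon measure on $\{u\le 1\}^\circ$.

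The main technical obstacle is the vanishing of $\int f_{\delta_j}\varphi\,dx$: although $(u_j-1)_+\to 0$ uniformly on $\mathrm{supp}(\varphi)$, the upper bound $c_0+c_1(u_j-1)_+^{p-1}$ from $(f_1)$ does not itself tend to zero because of the constant $c_0$, so the argument must exploit the shrinkage of $\{u_j>1\}\cap\mathrm{supp}(\varphi)$ to a null set. This shrinkage rests in turn on the free-boundary regularity $|\{u=1\}|=0$, compatible with $u$ being Lipschitz and satisfying the generalized free boundary condition \eqref{free_bdry_cond}. Once this is secured, the dominated convergence argument closes, and combining nonnegativity with the earlier vanishing of $\mu$ on $\{u<1\}$ delivers the support restriction to $\Omega\cap\partial\{u<1\}$.
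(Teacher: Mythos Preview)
Your approach differs fundamentally from the paper's: you pass to the limit through the approximating sequence $(u_j)$, whereas the paper argues directly on $u$ via an Alt--Caffarelli truncation. Specifically, the paper tests with $\zeta\max\{\min\{2-(1-u)/\delta,1\},0\}$ for nonnegative $\zeta\in C_0^\infty(\Omega)$ and lets $\delta\to 0$ to obtain the distributional inequality; no information about the level set $\{u=1\}$ is needed.

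Your route, however, has a genuine gap at the step you yourself flag as the ``main technical obstacle.'' To conclude $\int f_{\delta_j}(x,(u_j-1)_+)\varphi\,dx\to 0$, your bound degenerates only to $c_0\,|\{u=1\}\cap\mathrm{supp}(\varphi)|\,\|\varphi\|_\infty$, and you propose to kill this via ``free-boundary regularity $|\{u=1\}|=0$.'' But that vanishing is neither proved in the paper nor available at this stage: Lemma~\ref{convergence1}(iv) explicitly carries the term $|\{u=1\}|$ as a possibly positive quantity, and the generalized free boundary condition in Section~\ref{s4} is established \emph{after} Lemma~\ref{positive_Rad_meas} is invoked inside the proof of Lemma~\ref{convergence1}. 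So the appeal is both unproved and circular. The same circularity afflicts your use of strong $H^1_0$-convergence (part (iii) of Lemma~\ref{convergence1}), since (iii) is derived only after Lemma~\ref{positive_Rad_meas} has already been used; you could repair that particular point by noting weak convergence suffices for $\langle\mu_j,\varphi\rangle\to\langle\mu,\varphi\rangle$, but the $|\{u=1\}|$ obstruction remains.

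The Alt--Caffarelli argument in the paper avoids the approximants entirely and hence never confronts the measure of $\{u=1\}$; that is precisely what your limiting strategy loses.
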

\begin{proof}
We follow the proof due to {\sc Alt and Caffarelli} \cite{alt_caffa}. Choose $\delta>0$, $\beta\in(0,\beta_*)$, and a test function $\varphi^2\chi_{\{u:u<1-\delta\}}$ where $\varphi\in C_0^{\infty}(\Omega)$. Therefore,
\begin{align}\label{app_2}
\begin{split}
0&=-\int_{\Omega}\nabla u\cdot\nabla(\varphi^2\min\{u-1+\delta,0\})dx+\beta\int_{\Omega}u^{-\gamma}\varphi^2\min\{u-1+\delta,0\}dx\\
&=\int_{\Omega\cap\{u:u<1-\delta\}}\nabla u\cdot\nabla(\varphi^2(u-1+\delta))dx+\beta\int_{\Omega\cap\{u:u<1-\delta\}}u^{-\gamma}(\varphi^2(u-1+\delta))dx\\
&=\int_{\Omega\cap\{u:u<1-\delta\}}|\nabla u|^2\varphi^2dx+2\int_{\Omega\cap\{u:u<1-\delta\}}\varphi\nabla u\cdot\nabla\varphi(u-1+\delta)dx+\beta\int_{\Omega\cap\{u:u<1-\delta\}}u^{-\gamma}(\varphi^2(u-1+\delta))dx.
\end{split}
\end{align}
By an application of integration by parts to the second term of \eqref{app_2}, we get 
\begin{align}
\begin{split}
\int_{\Omega\cap\{u:u<1-\delta\}}|\nabla u|^2\varphi^2dx&=-2\int_{\Omega\cap\{u:u<1-\delta\}}\varphi\nabla u\cdot\nabla\varphi(u-1+\delta)dx+\beta\int_{\Omega\cap\{u:u<1-\delta\}}u^{-\gamma}(\varphi^2(u-1+\delta))dx\\
&\leq 4\int_{\Omega}u^2|\nabla\varphi|^2dx-\beta\int_{\Omega}u^{1-\gamma}\varphi^2dx\leq  4\int_{\Omega}u^2|\nabla\varphi|^2dx.
\end{split}
\end{align}
On passing to the limit $\delta\rightarrow 0$, we conclude that $u\in H_{\text{loc}}^{1,2}(\Omega)$.\\
\noindent Furthermore, for nonnegative $\zeta\in C_0^{\infty}(\Omega)$ we have
\begin{align}\label{app_3}
\begin{split}
-\int_{\Omega}\nabla\zeta\cdot\nabla udx+\beta\int_{\Omega}u^{-\gamma}\zeta dx=&\left(\int_{\Omega\cap\{u:0<u<1-2\delta\}}+\int_{\Omega\cap\{u:1-2\delta<u<1-\epsilon\}}+\int_{\Omega\cap\{u:1-\delta<u<1\}}\right.\\
&\left.+\int_{\Omega\cap\{u:u>1\}}\right)\\
&\times 
\left[\nabla\left(\zeta\max\left\{\min\left\{2-\frac{1-u}{\delta},1\right\},0\right\}\right)\cdot\nabla u\right.\left.+\beta u^{-\gamma}\zeta\right]dx\\
\geq& \int_{\Omega\cap\{u:1-2\delta<u<1-\delta\}}
\left[\left(2-\frac{1-u}{\delta}\right)\nabla\zeta\cdot\nabla u+\frac{\zeta}{\delta}|\nabla u|^2\right.\left.+\beta u^{-\gamma}\zeta\right]dx.
\end{split}
\end{align}
On passing to the limit $\delta\rightarrow 0$, we obtain $\Delta(u-1)_{-}\geq 0$ in the distributional sense, and hence there exists a Radon measure $\mu$ (say) such that $\mu=\Delta(u-1)_{-}\geq 0$.
\end{proof}
\section{Proof of the main theorem}\label{s7}
Finally we are in a position to prove  Theorem \ref{main_result}.
\begin{proof}[Proof of Theorem \ref{main_result}]
Choose $\alpha>\lambda$ and a sequence $\delta_j\rightarrow 0$ such that $\delta_j<\delta_0(\alpha)$. For each $j$, Lemma \ref{aux_lemma_1} gives a minimizer $u_1^{\delta}>0$ of $E_{\delta_j}$ that obeys
\begin{align}\label{weak_conv_27}
E_{\delta_{j}}(u_1^{\delta_j})&\leq m_1(\alpha)+2\alpha\delta_j c_0|\Omega|<0.
\end{align}
Further, by Lemma \ref{app_second_soln}, we can guarantee the existence of the second critical point $0<u_2^{\delta}\leq u_1^{\delta_j}$ such that 
\begin{align}\label{weak_conv_28}
m_2(\alpha)&\leq E_{\delta_j}(u_2^{\delta_j})\leq\frac{1}{2}\|u_1^{\delta_j}\|^2+|\Omega|.
\end{align}
The next step is to show that $(u_1^{\delta_j})$, $(u_2^{\delta_j})$ are bounded in $H_0^1(\Omega)\cap L^{\infty}(\Omega)$.  We shall  then apply Lemma \ref{convergence1}.\\
Since $H\geq 0$ and 
$$H_{\delta}(x,(t-1)_+)\leq c_0(t-1)_++\frac{c_1}{p}(t-1)_+^p\leq \left(c_0+\frac{c_1}{p}\right)|t|^p$$
for all $t$ by $(f_1)$, it follows that
\begin{align}\label{weak_conv_29}
\frac{1}{2}\|u_1^{\delta}\|^2&\leq E_{\delta}(u_1^{\delta})+\alpha\left(c_0+\frac{c_1}{p}\right)\int_{\Omega}(u_1^{\delta})^pdx+\beta \int_{\Omega}(u_1^{\delta})^{1-\gamma}dx.
\end{align}
Since $E_{\delta_j}(u_1^{\delta})<0$ by \eqref{weak_conv_27} and $p<2$, we have that $(u_1^{\delta_j})$ is bounded in $H_0^1(\Omega)$.\\
\noindent Since $f_{\delta}(x,(t-1)_+)=f_{\delta}(x,0)=0$ for any $t\leq 1$ and 
$$f_{\delta}(x,(t-1)_+)\leq c_0+c_1(t-1)^{p-1}\leq (c_0+c_1)t^{p-1}$$
whenever $t>1$ by $(f_1)$, we get
\begin{align}\label{weak_conv_30}
-\Delta u_1^{\delta_j}&=-\frac{1}{\delta_j}h\left(\frac{u_1^{\delta_j}-1}{\delta_j}\right)+\alpha f_{\delta_j}(x,(u_1^{\delta_j}-1)_+)+\beta (u_1^{\delta_j})^{-\gamma}\leq\alpha(c_0+c_1)(u_1^{\delta_j})^{p-1}+\beta (u_1^{\delta_j})^{-\gamma}.
\end{align}
However, when $u_1^{\delta_j}<1$, 
\begin{align}\label{weak_conv_31}
-\Delta u_1^{\delta_j}&=\beta (u_1^{\delta_j})^{-\gamma}
\end{align}
in which case $u_1^{\delta_j}=u_{\beta}|_{\{u_1^{\delta_j}<1\}}$.
\end{proof}
The sublinearity of \eqref{weak_conv_31} together with the boundedness of $(u_1^{\delta_j})$ in $H_0^1(\Omega)$ implies by the {\it Moser} iteration method that $(u_1^{\delta_j})$ in $L^{\infty}(\Omega)$. By a similar argument, $(u_2^{\delta_j})$ is also bounded in $L^{\infty}(\Omega)$ since $0<u_1^{\delta_j}\leq u_2^{\delta_j}$ in $\Omega$. On renaming the subsequence of $(\delta_j)$, the sequences $(u_1^{\delta_j})$, $(u_2^{\delta_j})$ converge uniformly to a Lipschitz continuous functions, say $u_1,u_2\in H_0^1(\Omega)\cap C^2(\bar{\Omega}\setminus G(u))$ respectively, of problem \eqref{main_prob} that satisfies
$$-\Delta u=\alpha\chi_{\{u>1\}}f(x,(u-1)_+)+\beta u^{-\gamma}$$
classically in the region $\Omega\setminus G(u)$, the free boundary condition in the generalized sense and furthermore, continuously vanishes on $\partial\Omega$. We also have that 
\begin{align}\label{weak_conv_32}
E(u_1)\leq\lim\inf E_{\delta_j}(u_1^{\delta_j})\leq \lim\sup E_{\delta_j}(u_1^{\delta_j})\leq E(u_1)+|\{u_1:u_1=1\}|
\end{align}
and
\begin{align}\label{weak_conv_33}
E(u_2)\leq\lim\inf E_{\delta_j}(u_2^{\delta_j})\leq \lim\sup E_{\delta_j}(u_2^{\delta_j})\leq E(u_2)+|\{u_2:u_2=1\}|.
\end{align}
Using \eqref{weak_conv_32} in combination with \eqref{weak_conv_27} and \eqref{weak_conv_19}, yields 
$$E(u_1)\leq \lim\sup E_{\delta_j}(u_1^{\delta_j})\leq m_1(\alpha)\leq E(u_1).$$
Therefore, 
\begin{align}\label{weak_conv_34}
E(u_1)&=m_1(\alpha)<-|\Omega|.
\end{align}
Similarly, combining \eqref{weak_conv_33} with \eqref{weak_conv_28} yields 
$$0<m_2(\alpha)\leq\lim\inf E_{\delta_j}(u_2^{\delta_j})\leq E(u_2)+|\{u_2:u_2=1\}|.$$
Thus, 
\begin{align}\label{weak_conv_35}
E(u_2)>-|\{u_2:u_2=1\}|\geq-|\Omega|.
\end{align}
So, from \eqref{weak_conv_34} and \eqref{weak_conv_35} we can conclude that $u_1, u_2$ are distinct and nontrivial solutions of problem \eqref{main_prob}. Here $u_1$ is a minimizer whereas $u_2$ is not . Also, since $u_2^{\delta_j}\leq u_1^{\delta_j}$ for each $j$, we have $u_2\leq u_1$. Since $u_2$ is a nontrivial solution, it follows that $0<u_2\leq u_1$ and the sets $\{u_1:u_1<1\}\subset\{u_2:u_2<1\}$ are connected if $\partial\Omega$ is connected. Moreover, the sets $\{u_2:u_2>1\}\subset\{u_1:u_1>1\}$ are nonempty. \qed 
 
\subsection*{Funding}
The first author (DC) has received funding from the National Board for Higher Mathematics (NBHM), Department of Atomic Energy (DAE) India, [02011/47/2021/NBHM(R.P.)/R\&D II/2615].
The second author (DDR) has received funding from the Slovenian Research Agency grants P1-0292, J1-4031, J1-4001, N1-0278, N1-0114, and N1-0083.

	\end{document}